\newcommand{\dd}{\mathrm{d}}
\newcommand{\dist}{\operatorname{dist}}
\definecolor{NiColor}{RGB}{77,77,255}
\definecolor{NiColoRed}{RGB}{255,77,77}
\definecolor{NiCitation}{RGB}{0,181,26}
\numberwithin{equation}{section}
\newtheorem{theorem}{Theorem}[section]
\newtheorem{lemma}[theorem]{Lemma}
\newtheorem{proposition}[theorem]{Proposition}
\newtheorem{corollary}[theorem]{Corollary}
\theoremstyle{definition}
\newtheorem{remark}[theorem]{Remark}
\newtheorem{definition}[theorem]{Definition}
\newtheorem{notation}[theorem]{Notation}
\begin{document}

\title{Global wave parametrices \\
on globally hyperbolic spacetimes}
\author{
	Matteo Capoferri\thanks{MC:
Department of Mathematics,
University College London,
Gower Street,
London WC1E~6BT,
UK;
\mbox{matteo.capoferri@gmail.com}.
\emph{Current address:} School of Mathematics, Cardiff University, Senghennydd Road, Cardiff CF24 4AG, UK.
}
\and
Claudio Dappiaggi\thanks{CD:
Dipartimento di Fisica,
Universit\`a degli Studi di Pavia \& INFN, Sezione di Pavia, 
Via Bassi 6,
I-27100 Pavia,
Italia;
\mbox{claudio.dappiaggi@unipv.it}
}
\and
Nicol\`o Drago \thanks{ND: Department of Mathematics, 
	Julius Maximilian University of W\"urzburg,
	Emil-Fischer-Stra\ss e 31,
	D-97074 W\"urzburg,
	Germany;
\mbox{nicolo.drago@mathematik.uni-wuerzburg.de}
}}

\renewcommand\footnotemark{}

\date{29 June 2020}

\maketitle
\begin{abstract}
	
In a recent work the first named author, Levitin and Vassiliev have constructed the wave propagator on a closed Riemannian manifold $M$ as a single oscillatory integral global both in space and in time with a distinguished complex-valued phase function. In this paper, first we give a natural reinterpretation of the underlying algorithmic construction in the language of ultrastatic Lorentzian manifolds. Subsequently we show that the construction carries over to the case of static backgrounds thanks to a suitable reduction to the ultrastatic scenario. Finally we prove that the overall procedure can be generalised to any globally hyperbolic spacetime with compact Cauchy surfaces. As an application, we discuss how, from our procedure, one can recover the local Hadamard expansion which plays a key role in all applications in quantum field theory on curved backgrounds.\\

{\bf Keywords: } wave propagator, global Fourier integral operators, globally hyperbolic spacetimes.

\

{\bf MSC classes: } primary 58J40; secondary 35L05, 53C50, 35Q40.

\end{abstract}

\tableofcontents

\section{Main results}
\label{Main results}

The study of hyperbolic partial differential equations on manifolds, particularly with reference to the D'Alembert wave operator, has been and continues to be, in various forms, at the forefront of scientific research. This is certainly true for mathematics, where the study of wave propagation has led over the years to major breakthroughs in the pure and in the applied side of the subject alike. 

In this context, of particular relevance is the so-called wave propagator which is defined as follows. Consider $(\Sigma,h)$ a connected closed smooth Riemannian manifold of dimension $\dim\Sigma\geq 2$ and let $\Delta$ be the Laplace--Beltrami operator associated with $h$. The latter is known to be a non-positive operator, thus the square root $\sqrt{-\Delta}$ is well-defined.  We call {\em wave propagator} the Fourier Integral Operator (FIO) $U(t)$, $t\in\mathbb{R}$, the distributional solution of the operator (pseudodifferential) initial value problem
\begin{equation}\label{eq: starting_point}
\left(-i\frac{\partial}{\partial t}+\sqrt{-\Delta}\right)U(t)=0,\quad U(0)=\operatorname{Id}.
\end{equation}
It is well-known that the knowledge of $U(t)$ is sufficient to solve the Cauchy problem associated with the D'Alembert wave operator $\frac{\partial^2}{\partial t^2}-\Delta$. For this reason the study of the wave propagator has attracted a lot of attention, especially by means of microlocal-analytic techniques, see, {\it e.g.}, \cite{Hor, shubin}.

In this paper we are interested instead in a different approach, which aims at a global construction of the wave propagator. This is based on the works of Laptev, Safarov Vassiliev \cite{LSV} and of Safarov and Vassiliev \cite{SaVa} in which it has been shown that the propagator for a wide class of hyperbolic equations can be written as a single FIO, global both in space and in time, with complex-valued -- as opposed to real-valued -- phase function. This approach has the advantage of circumventing obstructions due to caustics. Such viewpoint has recently been adopted in \cite{CLV}, where the authors proposed a global invariant definition of the full symbol of the wave propagator, together with an algorithm for the explicit calculation of its homogeneous components. It is worth observing that the compactness assumption on $\Sigma$ could be relaxed with quite some effort. In this paper we build upon \cite{CLV}, requiring that all underlying Riemannian manifolds are closed, in order to avoid unnecessary technical difficulties, focusing instead on the main ideas and constructions.

Our investigation originates from two key observations. On the one hand, the study of hyperbolic partial differential equations sees one of its main applications in the description of physical phenomena modelled over Lorentzian manifolds. These are the natural playground of several theories, such as general relativity and quantum field theory on curved backgrounds, whose mathematical properties have been for decades the subject of thorough investigations. On the other hand, the reinterpretation of the framework depicted in \cite{CLV} in the language of Lorentzian geometry is rather natural, particularly with reference to a specific class of backgrounds known as {\em ultrastatic spacetimes}.

In this paper we will move from the observations above, asking ourselves to which extent it is possible to adapt and generalise the results of \cite{CLV} to the Lorentzian setting. In doing so, we will focus on the construction of the wave propagator on a distinguished class of spacetimes, known as {\em globally hyperbolic}, see, {\it e.g.}, \cite{Bar:2007zz}. These are of notable importance since they represent the natural collection of Lorentzian manifolds on which the Cauchy problem for hyperbolic partial differential equations of the like of the D'Alembert wave equation is well-posed. It is important to stress that, by far, this is not the first paper in which FIOs are used in a Lorentzian framework to analyse distinguished parametrices, see, {\it e.g.}, \cite{Junker:2001gx,Strohmaier:2018bcw}. Yet, our method differs significantly from those used in earlier publications, especially with regards to the invariant approach relying on the use of a distinguished complex-valued phase function.

The main goal of this paper is to construct explicitly, in terms of Fourier integral operators, solutions to the wave equation on a Lorentzian spacetime $(M,g)$ with compact Cauchy surface. More precisely, we will construct, modulo an operator with infinitely smooth Schwartz kernel, the solution operator mapping initial data on a given Cauchy surface to full solutions in $M$, in a global and invariant fashion -- effectively, a Lorentzian analogue of the wave group. As already mentioned, our strategy consists in extending and adapting results from \cite{CLV}, which yield an explicit -- {\it i.e.}~up to solving ordinary differential equations -- and invariant formula for the integral kernel of the solution operator (precise definitions will be provided later on). As our technique relies heavily on microlocal analysis, all our constructions capture the singular structure of the mathematical objects involved, modulo infinitely smooth contributions.

In writing this paper we have in mind the mathematical results on the one side and their potential applications on the other. For this reason, we refrain, at times, from providing arguments and proofs in the most general possible case, in the attempt to make the paper accessible also to a mathematical physics readership not fully familiar with the technical theory of Fourier integral operators.

Our main results are as follows.

\begin{enumerate}
\item Working on an ultrastatic spacetime $\mathbb{R}\times \Sigma$ with compact Cauchy surface, we extend results from \cite{CLV} to construct the operator
\begin{equation*}
e^{-i \sqrt{-\Delta_h+V} t}
\end{equation*}
as a single oscillatory integral global in space and in time, see Subsection~\ref{Subsec: the algoritm}
 and Theorem~\ref{main theorem static propagator}. Here $\Delta_h$ is the Laplace--Beltrami operator on $\Sigma$ and $V$ a time-independent smooth potential.
 
\item Using the extended version of the results from \cite{CLV}, we provide an explicit algorithm for the construction of the wave propagator on static spacetimes, by reduction to the ultrastatic case, see Section~\ref{Sec: The wave propagator in static spacetimes}. Observe that, for this class of spacetimes, the knowledge of the wave propagator is sufficient to construct also the advanced and retarded fundamental solutions for the D'Alembert wave operator. These are two distinguished inverses characterised by their support properties. They play an important role in applications, especially in quantum field theory, since they capture the finite speed of propagation encoded within the wave equation.

\item Working on a globally hyperbolic spacetime $M$ with compact Cauchy surface and fixing a Cauchy surface $\Sigma_s:=\{s\}\times\Sigma$, we construct the operator $U(t,s):C_0^\infty(\Sigma_s) \to M$ whose integral kernel $u$ (i) satisfies the wave equation (in a distributional sense) modulo $C^\infty$, (ii) has a wavefront set of Hadamard type, as a single oscillatory integral, global in space and in time, with distinguished complex-valued phase function. See Theorem~\ref{Thm: wave propagator for globally hyperbolic spacetimes}.

\item We illustrate how our procedure can be applied to the construction of Hadamard states and we demonstrate how to recover from our global integral kernel the (local) Hadamard expansion commonly used in algebraic quantum field theory.
\end{enumerate}

We remark that G\'erard and Wrochna have also addressed the problem of constructing Hadamard states on globally hyperbolic spacetimes in \cite{Gerard-Wrochna-14}. 
Their construction shares some common features with our approach, though their work is based mainly on pseudo-differential techniques.

\vskip .5cm

\noindent The paper is structured as follows.

In Section~\ref{Geometric preliminaries} we define the geometric objects we will be using throughout the paper and fix notation and conventions.

In Section~\ref{Sec: The wave propagator in static spacetimes} we construct the wave propagator on static spacetimes. This is done in several steps. First, by a conformal transformation we reduce the problem at hand to an auxiliary problem on an ultrastatic background; secondly, working on an ultrastatic spacetime, we extend the results from \cite{CLV} to encompass the case of wave operators with arbitrary time-independent infinitely smooth potential and solve the auxiliary problem; finally, we go back to the static case by conjugating by suitable multiplication operators.

In Section~\ref{Sec: Extension to general globally hyperbolic spacetimes} we construct, using Fourier integral operator techniques, a Lorentzian analogue of the wave group. The key idea is to identify a distinguished complex-valued phase function encoding information about the propagation of singularities and the Lorentzian geometry and then exploit the construction developed in \cite{CLV} for the Riemannian case.

Lastly, in Section~\ref{Sec: An application: Hadamard states} we discuss one of the potential applications of the method, namely, the global construction of Hadamard states, relevant in algebraic quantum field theory on curved spacetime.

The paper is complemented by an appendix, Appendix~\ref{App: Fundamental Facts on Fundamental Solutions}, where we recall in an abridged manner basic facts about fundamental solutions of normally hyperbolic operators.

\section{Geometric preliminaries}
\label{Geometric preliminaries}

In this section we discuss the geometric data which will play a key role in this paper, and fix notation and conventions. 

We call \emph{spacetime} a pair $(M,g)$ where $M$ is a connected, Hausdorff, second countable, orientable smooth manifold of dimension $d\ge2$ and $g$ is a smooth Lorentzian metric of signature $(+,-,...,-)$.

A spacetime is said to be \emph{globally hyperbolic} if it possesses a \emph{Cauchy surface}, that is a closed achronal subset $\Sigma\subset M$ whose domain of dependence $D(\Sigma)$ is the whole spacetime $M$, see \cite[\S 8.1]{Wald}. Globally hyperbolic spacetimes are particularly important because their geometry ensures the well-posedness of Cauchy problems for normally hyperbolic operators ({\it e.g.}, the wave operator).

A remarkable and very convenient equivalent characterisation of global hyperbolicity was provided by Bernal and Sanchez in \cite{Bernal:2004gm,Bernal:2005qf}. We report below their result as formulated in \cite[\S 1.3]{Bar:2007zz}.

\begin{theorem}\label{Th:glob_hyp}
	Let $(M,g)$ be a time-oriented spacetime. Then the following statements are equivalent:
	\begin{enumerate}[(i)]
		\item $(M,g)$ is globally hyperbolic
		\item $(M,g)$ is isometric to $\mathbb{R}\times\Sigma$ endowed with the line element 
\begin{equation}
\label{metric general form BS}
ds^2=\beta \,dt^2-h_t,
\end{equation}
where $t:\mathbb{R}\times\Sigma\to\mathbb{R}$ is the projection onto the first factor, $\beta$ is a strictly positive smooth function on $\mathbb{R}\times\Sigma$, and $t\mapsto h_t$ is a one-parameter family of smooth Riemannian metrics. Furthermore, for all $t\in\mathbb{R}$, $\{t\}\times\Sigma$ is a smooth, $(d-1)$-dimensional spacelike Cauchy surface.
	\end{enumerate}
\end{theorem}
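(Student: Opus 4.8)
The statement is the celebrated splitting theorem of Geroch and of Bernal--Sanchez, which in the excerpt is simply quoted from \cite{Bar:2007zz}; if one wanted a self-contained argument, I would establish the two implications separately, the bulk of the work lying in $(i)\Rightarrow(ii)$. The implication $(ii)\Rightarrow(i)$ is essentially built into the formulation of $(ii)$, which already declares $\{0\}\times\Sigma$ to be a Cauchy surface, so $(M,g)$ is globally hyperbolic by definition. To extract this from the metric form alone, I would first note that along any future-directed causal curve $\gamma$ the function $t\circ\gamma$ is strictly increasing: a causal tangent vector $v=a\,\partial_t+w$, with $w$ tangent to the slice, obeys $0\le g(v,v)=\beta\,a^2-h_t(w,w)$, whence $a\ne0$ unless $v=0$, and then $a>0$ by future-orientation; one then shows, using $\beta>0$ and smoothness of the family $h_t$, that $t$ exhausts $\mathbb{R}$ along every inextendible causal curve, so that each level set is met exactly once.

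For $(i)\Rightarrow(ii)$ I would argue in four steps. \emph{Step 1 (Geroch).} Construct a continuous Cauchy time function $\tau\colon M\to\mathbb{R}$, strictly increasing along future-directed causal curves with Cauchy level sets, by fixing a finite Borel measure $\mu$ on $M$ in the measure class of the volume density, setting $t^{\pm}(p):=\mu\bigl(J^{\pm}(p)\bigr)$ and $\tau:=\log(t^{-}/t^{+})$; global hyperbolicity gives continuity of $t^{\pm}$ and the Cauchy property of $\{\tau=c\}$. \emph{Step 2 (Bernal--Sanchez smoothing).} Upgrade $\tau$ to a smooth \emph{temporal function} $T$, i.e.\ a smooth function whose gradient $\nabla T$ is timelike everywhere, arranging moreover that $T$ be surjective onto $\mathbb{R}$ with Cauchy level sets; this is done by approximating $\tau$, on the members of a locally finite cover, by smooth functions with uniformly timelike gradient and patching them so that timelikeness of $\nabla T$ survives the summation.

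\emph{Step 3 (flowing out the splitting).} Put $\Sigma:=T^{-1}(0)$ and $X:=\nabla T/g(\nabla T,\nabla T)$, normalised so $X(T)=1$. Since $\nabla T$ is timelike the integral curves of $X$ are causal and inextendible, hence meet each Cauchy level set $T^{-1}(c)$ exactly once; as $T$ grows at unit rate along the $X$-flow and is surjective, this flow $\Phi$ is complete, so $\Psi\colon\mathbb{R}\times\Sigma\to M$, $\Psi(t,x):=\Phi_t(x)$, is a diffeomorphism with $T\circ\Psi(t,x)=t$ and $\Psi\bigl(\{t\}\times\Sigma\bigr)=T^{-1}(t)$. \emph{Step 4 (reading off the metric).} Pull $g$ back by $\Psi$. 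As $\partial_t=X$ is parallel to $\nabla T$ it is $g$-orthogonal to the level sets, so $g(\partial_t,\partial_{x^i})=0$ and the pulled-back metric has no cross terms; it is therefore of the form $\beta\,dt^2-h_t$ with $\beta=g(X,X)=1/g(\nabla T,\nabla T)>0$ (positive because $\nabla T$ is timelike in signature $(+,-,\dots,-)$) and $-h_t$ the metric induced on $T^{-1}(t)$, negative-definite since that hypersurface is spacelike; hence $t\mapsto h_t$ is a smooth one-parameter family of Riemannian metrics and each $\{t\}\times\Sigma=T^{-1}(t)$ is a smooth spacelike Cauchy surface.

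The main obstacle is \emph{Step 2}. Geroch's construction is soft and purely causal-theoretic, but produces only a continuous time function whose level sets need be neither smooth nor spacelike; the real content of the Bernal--Sanchez theorem is precisely the smoothing --- producing a smooth function whose gradient is timelike \emph{everywhere} while preserving the Cauchy property --- which demands delicate quantitative control of the approximation along a locally finite cover of $M$. In practice we would, as the excerpt does, simply invoke this result; reproving it is where essentially all the difficulty would lie.
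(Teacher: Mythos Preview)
The paper does not prove this theorem at all: it is stated as a quotation of the Bernal--Sanchez result (via \cite{Bar:2007zz}), with the proof deferred entirely to \cite{Bernal:2004gm,Bernal:2005qf}. You correctly recognise this in your opening sentence, and your sketch of the standard Geroch/Bernal--Sanchez argument is accurate and well-organised, with the difficulty correctly located in the smoothing step. There is nothing in the paper to compare against; your proposal simply goes further than the paper does, and does so soundly.
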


\noindent In this paper we will only consider globally hyperbolic spacetimes and we will always work with the metric in the standard form \eqref{metric general form BS}. Furthermore, we will make the additional assumption that $\Sigma$ is compact, hence in particular closed.
We shall adopt the convention that Greek (resp.\ Latin) indices are associated with quantities related to $\Sigma$ (resp.\ to $M$).
Moreover, points of $M$ are denoted with uppercase Latin letters, points of $\Sigma$ are denoted with lowercase Latin letters.

For later convenience, we observe that, denoting by $\pi_1:M\to\mathbb{R}$ and $\pi_2:M\to\Sigma$ the two natural projection maps,
 global hyperbolicity yields the isomorphisms of vector bundles
\begin{equation}
\label{decomposition tangent and cotangent bundle}
TM\simeq \pi_1^*T\mathbb{R}\oplus\pi_2^*T\Sigma,\qquad T^*M\simeq \pi_1^*T^*\mathbb{R}\oplus\pi_2^*T^*\Sigma,
\end{equation}
where $\oplus$ indicates the Whitney sum between the pull-back bundles, see \cite{Husemoller-94}.

For every point $Y\in M$, we denote by $\mathcal{N}(Y)$ the normal convex neighbourhood of $Y$ consisting of all points $X\in M$ such that there exists a unique geodesic $\gamma$ connecting $Y$ to $X$.
Correspondingly we define the {\em Synge's world function} as the scalar map
\begin{equation}\label{eq:Synge_world_function}
	\sigma(X,Y):=\frac{1}{2}(\lambda_1-\lambda_0)\int\limits_{\lambda_0}^{\lambda_1}d\lambda\;g(\dot{\gamma}(\lambda),\dot{\gamma}(\lambda)), \qquad \forall Y \in M, \forall X\in \mathcal{N}(Y),
\end{equation}
where $\dot{\gamma}(\lambda)$ is the tangent vector at $\gamma(\lambda)$ to the geodesic $\gamma$ connecting $X$ to $Y$ with the affine parameter chosen in such a way that $\gamma(\lambda_0)=Y$ and $\gamma(\lambda_1)=X$, see \cite[\S 3]{Poisson:2011nh}.
It is important to stress that \eqref{eq:Synge_world_function} corresponds also to half of the squared geodesic distance (in the Lorentzian sense) between $X$ and $Y$.

A globally hyperbolic spacetime is called {\em static} if it possesses an irrotational timelike Killing vector field. This is equivalent to requiring that neither $\beta$ nor $h_t$ in \eqref{metric general form BS} depend on $t$. A globally hyperbolic spacetime is called {\em ultrastatic} if it is static and, in addition, $\beta=1$. 

\begin{remark}
Observe that any $(d-1)$-dimensional closed Riemannian manifold $(\Sigma,h)$ gives rise to a unique (up to isometries) ultrastatic globally hyperbolic $d$-dimensional spacetime 
\[
(\mathbb{R}\times\Sigma,ds^2=dt^2-h),
\] 
see \cite{Kay:1978yp}. 
\end{remark}

Let $\nabla$ be the Levi-Civita connection associated with $g$ and
\begin{equation}
\label{density}
\rho_g(X):=\sqrt{\left| \det g_{ab}(X)\right|}\,
\end{equation}
be the Lorentzian density.
Given a smooth real scalar field $\Phi:M\to\mathbb{R}$, the Klein--Gordon equation is defined to be
\begin{equation}\label{Eq:KG}
P\Phi:=(\Box_g+\xi \mathcal{R}+m^2)\Phi=0,\quad \xi\in\mathbb{R},\;\textrm{and}\;m^2\geq 0,
\end{equation}
where $\mathcal{R}$ is scalar curvature  and $\Box_g$ is the D'Alembert wave operator acting on scalar functions, defined by
\begin{equation}\label{eq:d'Alembert}
\Box_g:= g^{ab}(X)\,\nabla_a\nabla_b=[\rho_g(X)]^{-1}\,\partial_a\left(\rho_g(X)\,g^{ab}(X)\,\partial_b\right).
\end{equation}
In our paper we will focus mainly on the wave equation, which is a special case of the Klein--Gordon equation obtained by setting $\xi=m=0$. 

\section{The wave propagator on static spacetimes}
\label{Sec: The wave propagator in static spacetimes}

In this section we have a twofold goal. First, we consider the d'Alembert wave operator on a globally hyperbolic static spacetime with compact Cauchy surface and we show that the construction of the advanced and retarded fundamental solutions can be reduced to that of the half-wave propagator for an associated partial differential operator. Secondly, we explicitly construct the latter, modulo an infinitely smoothing operator, by relying on results and ideas from \cite{CLV}. We start from the static scenario as it encompasses, in essence, all features of the most general case, hence providing an intuitive picture of the Lorentzian extension, stripped of all technicalities here unnecessary.

\subsection{Reduction to the ultrastatic case}
\label{Sec: Reduction to the ultrastatic case}

Consider a globally hyperbolic static spacetime $(M,\widetilde{g})$. On account of Theorem \ref{Th:glob_hyp}, $M\simeq \mathbb{R}\times \Sigma$ and the metric tensor $\widetilde{g}$ can be written as 
\[
\widetilde{g}=\beta\,dt^2 - \widetilde{h},
\]
where both $\beta$ and $\widetilde{h}$ do not depend on $t$. Via a conformal rescaling we can construct an ultrastatic metric
\[
g:=\beta^{-1}\, \widetilde{g} = \dd t^2 - h,
\]
where $h=\beta^{-1}\,\widetilde{h}$. The wave operators $\square_{\widetilde{g}}$ and $\square_g$ associated with $\widetilde{g}$ and $g$, respectively, see \eqref{eq:d'Alembert}, are related by 
\begin{equation}
\label{relation between box tilde and box}
\beta^{\frac{2+d}{4}}\,\square_{\widetilde{g}}\, \beta^{\frac{2-d}{4}}=\square_g+V,
\end{equation}
where $d=\dim M$ and
\begin{equation}
\label{pontential V static}
V=\beta^{-\frac{2-d}{4}}\left(\Delta_h \,\beta^{\frac{2-d}{4}}\right).
\end{equation}
Here and further on, $\Delta_h$ denotes the Laplace--Beltrami operator on $\Sigma$ associated with $h$. 

Direct inspection of formula \eqref{relation between box tilde and box} reveals that constructing the advanced and retarded fundamental solutions  $\widetilde{G}^\pm$ associated to $\square_{\widetilde{g}}$, {\it cf.} Theorem \ref{Thm:propagators}, is equivalent to constructing $G^\pm$, their counterparts for the operator 
\begin{equation}
\label{new hyperbolic operator}
\Box_g+V=\partial_t^2-\Delta_h+V.
\end{equation}
The two pairs of operators are related by the identity
\begin{equation}
\label{relation between adcanved and retarded static}
\widetilde{G}^\pm=\beta^{\frac{2-d}{4}}\circ G^\pm \circ \beta^{\frac{2+d}{4}}.
\end{equation}

The above argument shows that one can reduce the problem of constructing the advanced and retarded fundamental solutions for the D'Alembert wave operator on a static spacetime to the same problem for the operator \eqref{new hyperbolic operator} on an ultrastatic background, instead.

\subsection{Construction of the global static parametrix}
\label{Subsec: the algoritm}

Recently the first-named author, M.~Levitin and D.~Vassiliev have developed a geometric approach to construct the wave propagator globally and invariantly on a closed Riemannian manifold \cite{CLV}, using Fourier integral operator techniques.
We refer the reader to \cite{CLV,CV} for a detailed review of the literature on global FIOs and their applications to hyperbolic propagators.
 
What presented in \cite{CLV} is, effectively, the construction of 
advanced and retarded fundamental solutions for the D'Alembert operator on an ultrastatic spacetime, up to an operator with infinitely smooth Schwartz kernel. Dealing with an ultrastatic background is much simpler than dealing with a general globally hyperbolic spacetime, in that in the former case constructing the half-wave propagator
\begin{equation}
\label{half-wave propagator}
e^{-i t \sqrt{-\Delta}}
\end{equation}
is enough to the end of constructing the retarded-minus-advanced fundamental solution, see \cite[Section~1]{CLV}.

In the remainder of this Section, we will show how to adapt the results from \cite{CLV} to the case of static spacetimes, exploiting the reduction to an ultrastatic background presented in Subsection~\ref{Sec: Reduction to the ultrastatic case}. The extension to a wider class of globally hyperbolic spacetimes is postponed to Section~\ref{Sec: Extension to general globally hyperbolic spacetimes}.

Before doing so, let us recall in an abridged manner basic notions from symplectic geometry. We refer the reader to \cite[Vol.~IV]{Hor} for a comprehensive exposition.

Let $N$ be a smooth $(d-1)$-dimensional manifold and let $\omega \in \Omega^2(N)$ be the canonical symplectic form on the cotangent bundle $T^*N$.
A diffeomorphism $\mathcal{C}:T^*N \to T^*N$ is called a \emph{canonical transformation} if it preserves $\omega$ under pullback. In addition we say that $\mathcal{C}$ is \emph{positively homogeneous of degree $k$} if $\mathcal{C}(y,\lambda\,\eta)=\lambda^k\, \mathcal{C}(y,\eta)$ for every $\lambda>0$. The (twisted) graph $\Lambda$ of $\mathcal{C}$ is a Lagrangian submanifold of $T^*N\times T^*N$ with respect to the symplectic structure induced by $\omega$. 
Similar definitions are given when the canonical transformation depends on additional parameters. This happens, remarkably, when $\mathcal{C}$ is the Hamiltonian flow generated by a
Hamiltonian $\mathfrak{h}\in C^\infty(T'N;\mathbb{R})$,
where $T'N:=T^*N\setminus \{0\}$ denotes the cotangent bundle with the zero section removed. 

For $(y,\eta)\in T'N$, we denote by $(x^*(s;y,\eta),\xi^*(s;y,\eta))_{s\in\mathbb{R}}$ the solution to Hamilton's equations
\begin{equation*}\left\{
\begin{array}{l}
\dot{x}^*(s;y,\eta)=\mathfrak{h}_\xi((x^*(s;y,\eta),\xi^*(s;y,\eta))\,,\\
\dot\xi^*(s;y,\eta)=-\mathfrak{h}_x((x^*(s;y,\eta),\xi^*(s;y,\eta))\,,\\
(x^*(0;y,\eta),\xi^*(0;y,\eta))=(y,\eta)\,.
\end{array}\right.
\end{equation*}
Here and further on the dot stands for differentiation with respect to the flow parameter whereas the subscript $_\xi$ (resp.\ $_x$) denotes differentiation with respect to the $\xi$- (resp.\ $x$-) variable.
It is easy to see that
\[
{(y,\eta)\mapsto(x^*(s;y,\eta),\xi^*(s;y,\eta))}
\] 
is a one-parameter family of canonical transformations and, as such, it generates a one-parameter family of Lagrangian submanifolds $\Lambda_{\mathfrak{h},s}\subseteq T'N\times T'N$. If in addition the Hamiltonian $\mathfrak{h}$ is positively homogeneous of degree one in momentum, then the Lagrangian submanifold generated by the Hamiltonian flow is a conic submanifold of $T'N\times T'N$. 

\begin{definition}\label{Def: phase function (of class Lh)}
	We call \emph{phase function} a function $\varphi\in C^\infty(\mathbb{R}\times N\times T'N;\mathbb{C})$ which is
\begin{itemize}
\item \emph{nondegenerate}, {\it i.e.}
\[
\det\varphi_{x^\alpha\eta_\beta} \ne 0
\]
on 
\begin{equation}
\label{critical set}
\mathfrak{C}_\varphi:=\{(s,x;y,\eta)\,\,|\,\,\varphi_\eta(s,x;y,\eta)=0 \}\,;
\end{equation}
\item positively homogeneous of degree one in momentum, i.e.\
\[
\varphi(s,x;y,\lambda\eta)=\lambda\,\varphi(s,x;y,\eta) \qquad \forall\lambda>0, \,(s,x;y,\eta)\in \mathbb{R}\times N \times T'N.
\]
\end{itemize}	
\end{definition}

\begin{definition}	
Given a Hamiltonian $\mathfrak{h}$ positively homogeneous of degree $1$, we say that a phase function $\varphi=\varphi(s,x,y,\eta)$ is \emph{of class $\mathcal{L}_\mathfrak{h}$}, and we write $\varphi\in \mathcal{L}_\mathfrak{h}$, if 
	\begin{enumerate}[(i)]
		\item $\left.\varphi\right|_{x=x^*(s;y,\eta)}=0$,
		\item $\left.\varphi_{x^\alpha}\right|_{x=x^*(s;y,\eta)}=\xi_\alpha^*(s;y,\eta)$,
		\item $\left.\det\varphi_{x^\alpha\eta_\beta}\right|_{x=x^*(s;y,\eta)}\ne0$,
		\item $\operatorname{Im}\varphi\ge0$.
	\end{enumerate}
\end{definition}

Phase functions $\varphi\in \mathcal{L}_\mathfrak{h}$ allow one to globally parameterise the Lagrangian submanifold $\Lambda_{\mathfrak{h},s}$ \cite{LSV}, namely, in local coordinates $x$ and $y$ and in a neighbourhood of a given point of $\Lambda_{\mathfrak{h},s}$, we have
\begin{equation}
\Lambda_{\mathfrak{h},s}\simeq\{ \bigl((x,\varphi_x(s,x;y,\eta)),(y,\varphi_y(s,x;y,\eta))\bigr)\,\,|\,\,(s,x;y,\eta)\in \mathfrak{C}_\varphi \},
\end{equation}
where $\mathfrak{C}_\varphi$ is given by  \eqref{critical set}.

Specialising the above notation to the case when $N=\Sigma$ is a closed Riemannian $(d-1)$-manifold endowed with a Riemannian metric $h$, consider the ultrastatic spacetime $(M:=\mathbb{R}\times \Sigma, dt^2-h)$ and put 
\begin{equation}
\label{operator E}
E:=\Delta_h-V,
\end{equation}
where $V\in C^\infty(\Sigma;\mathbb{R})$ is a time-independent smooth real potential.

The operator $E$ is a formally self-adjoint elliptic second order linear partial differential operator on $C^\infty(\Sigma;\mathbb{C})$. As we are adding a zero order perturbation, the principal symbol of $E$ 
\begin{equation}
\label{E prin temp 1}
E_\mathrm{prin}(x,\xi):=-h^{\alpha\beta}(x)\,\xi_\alpha\,\xi_\beta, \quad (x,\xi)\in T'\Sigma,
\end{equation}
coincides with that of $\Delta_h$. 

It is well known that the singularities of the solutions to $(\partial_t^2-E)f=0$ propagate along null geodesics. In this setting, these are nothing but the lift to $M$ of the Hamiltonian flow on $\Sigma$ associated with the Hamiltonian $\mathfrak{h}(x,\xi):=\sqrt{-E_{\mathrm{prin}}(x,\xi)}$, where $t$ is equal to the parameter $s$ along the flow. 

Since $\Sigma$ is compact, the spectrum of $-E$ is discrete and accumulates to $+\infty$, but unlike for the case of the Laplace--Beltrami operator, it is no longer guaranteed to be non-negative, as the presence of the potential $V$ may bring about the appearance of negative eigenvalues. Let us denote by $\zeta_k$ the eigenvalues of $-E$ and by $v_k$ the corresponding orthonormalised eigenfunctions,
\begin{equation}
-E \,v_k=\zeta_k  \,v_k,
\end{equation}
labelling positive eigenvalues with positive index $k$ and nonpositive eigenvalues with nonpositive index $k$, in increasing order and with account of their multiplicities. 

Let 
\[
\Pi^-: L^2(\Sigma)\to L^2(\Sigma)
\]
be the orthogonal projection onto the direct sum of the eigenspaces corresponding to non-positive eigenvalues of $-E$ and put
\[
\Pi^+:=\mathrm{Id}-\Pi^-.
\]
Clearly, $\Pi^-$ has finite rank, as $E$ has at most a finite number of non-negative eigenvalues, all with finite multiplicity.

Let us define
\begin{equation}
\label{operator U static with functional calculus}
U^+(t):=\sum_{\zeta_k>0} e^{-it\sqrt{\zeta_k}} \, v_k \,(v_k, \,\cdot\,)_{L^2(\Sigma)},
\end{equation}
where $\sqrt{\zeta_k}$ is the positive square root of $\zeta_k>0$.
One can see that, for any choice of the square root $\sqrt{-E}$ (with branch cut away from the spectrum) such that
\[
U^+(t)=e^{-it\sqrt{-E}}\,\Pi^+,
\]
the operator
\[
U^+(t)-e^{-it\sqrt{-E}}= e^{-it\sqrt{-E}}\,\Pi^-
\]
is infinitely smoothing.
\

This tells us that constructing $U^+(t)$ is equivalent to constructing $e^{-it\sqrt{-E}}$ up to an infinitely smoothing operator and, furthermore, that working modulo operators with infinitely smooth kernel allows one to disregard the ambiguity in the definition of $\sqrt{-E}$ on $\Pi^- L^2(\Sigma)$ and in the choice of the branch cut.

Define $L^2_\pm(\Sigma):=\Pi^\pm L^2(\Sigma)$. 
The operator $U^+(t)$ satisfies, in a distributional sense,
\begin{subequations}
\label{approximation U properties}
\begin{equation}
(-i \partial_t +\sqrt{-E})U^+(t)=0,
\end{equation}
\begin{equation}
\label{Identity U plus exact}
U^+(0)|_{L^2_+(\Sigma)}=\mathrm{Id}_{L^2_+(\Sigma)}.
\end{equation}
\end{subequations}
Our strategy consists in approximating the operator
\begin{equation}
\label{operator U without plus static}
U(t):=U^+(t)|_{L^2_+(\Sigma)}\oplus \mathrm{Id}_{L^2_-(\Sigma)}:L^2_+(\Sigma)\oplus L^2_-(\Sigma)\to L^2(\Sigma).
\end{equation}
by a single oscillatory integral, global in space and in time. The operator $U(t)$ will 
\begin{enumerate}[(a)]
	\item
	satisfy equations \eqref{approximation U properties} on $L^2(\Sigma)$ up to an infinitely smoothing operator and

	\item
	coincide with $U^+(t)$, hence with $e^{-it\sqrt{-E}}$ up to an infinitely smoothing operator.
\end{enumerate}

\begin{theorem}
\label{main theorem static propagator}
The Schwartz kernel 
\begin{equation}
\label{integral kernel static}
u(t,x,y)=\sum_{\zeta_k>0} e^{-it\sqrt{\zeta_k}} \, v_k(x)  \overline{v_k(y)}+\sum_{\zeta_k\le 0} v_k(x)  \overline{v_k(y)}
\end{equation}
of the operator \eqref{operator U without plus static} can be written, modulo an infinitely smooth function in all variables, as a single oscillatory integral
\begin{equation}
\label{oscillatory integral static theorem}
u(t,x,y)\overset{\mod C^\infty}{=}\dfrac{1}{(2\pi)^{d-1}}\int_{T^*_y\Sigma} e^{i \,\varphi(t,x;y,\eta)}\,\mathfrak{a}(t;y,\eta)\, \chi_\mathfrak{h}(t,x;y,\eta) \,w(t,x;y,\eta)\,d \eta,
\end{equation}
global in space and in time, where
\begin{enumerate}[(a)]
\item $\varphi$ is a phase function of class $\mathcal{L}_\mathfrak{h}$, with $\mathfrak{h}(y,\eta)=\sqrt{h^{\alpha\beta}(y)\,\eta_\alpha\eta_\beta}$;
\item $\mathfrak{a}\in C^\infty(\mathbb{R},S^0_{\mathrm{ph}}(T'\Sigma))$, namely it is a polyhomogeneous symbol of order zero depending smoothly on $t$,
\[
\mathfrak{a} \sim \sum_{j=0}^\infty \mathfrak{a}_{-j}, \qquad 
\mathfrak{a}_{-j}(t;y,\lambda\,\eta)=\lambda^{-j}\, \mathfrak{a}_{-j}(t;y,\eta)\quad \forall\lambda>0;
\]
\item $\chi_\mathfrak{h}$ is a smooth cut-off subordinated to $\mathfrak{h}$, namely, an infinitely smooth function on $\mathbb{R}\times \Sigma\times T'\Sigma$ satisfying
	\begin{enumerate}[(i)]		
			\item $\chi_\mathfrak{h}(t,x;y,\eta)=0$ on $\{(t,x;y,\eta) \,|\, |\mathfrak{h}(y,\eta)|\leq 1/2\}$;
			\item $\chi_\mathfrak{h}(t,x;y,\eta)=1$ on the intersection between $\{(t,x;y,\eta) \,|\, |\mathfrak{h}(y,\eta)| \geq 1\}$ and any conical neighbourhood of $\{(t,x^\ast(t;y,\eta);y,\eta) \}$;
			\item $\chi_\mathfrak{h}(t,x;y,\alpha\, \eta)=\chi_\mathfrak{h}(t,x;y,\eta)$ for $\alpha\geq 1$ on $\{ (t,x;y,\eta) \, | \, |\mathfrak{h}(y,\eta)|\geq 1  \}$;
	\end{enumerate}
	
\item the weight $w$ is defined as
\begin{equation}
\label{definition weight}
w(t,x;y,\eta):=
[\rho_h(x)]^{-1/2}\,[\rho_h(y)]^{-1/2}
\left[
{\det}^2\!
\left(
\varphi_{x^\alpha \eta_\beta}(t,x;y,\eta)
\right)
\right]^{1/4},
\end{equation}
where $\rho_h$ is the Riemannian density ({\it cf.}\ \eqref{density}) and the branch of the complex root is chosen so that 
\[
\left.
\arg \left[
{\det}^2\!
\left(
\varphi_{x^\alpha \eta_\beta}(t,x;y,\eta)
\right)
\right]^{1/4} \right|_{t=0} =0.
\]
\end{enumerate}
\end{theorem}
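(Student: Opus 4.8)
The plan is to reduce the statement to the corresponding result of \cite{CLV} for the half-wave propagator on a closed Riemannian manifold, taking care of the zero-order perturbation $V$ and of the low-frequency/finite-rank contributions separately. First I would split the Schwartz kernel \eqref{integral kernel static} into the sum over positive eigenvalues, which is the kernel of $U^+(t)=e^{-it\sqrt{-E}}\,\Pi^+$, and the finite sum over nonpositive eigenvalues, which is the kernel of the finite-rank smoothing operator $\Pi^-$; since $\Pi^-$ has smooth kernel (all $v_k$ are smooth, being eigenfunctions of an elliptic operator), it can be absorbed into the "$\!\!\mod C^\infty\!$" error, and one is left to represent the kernel of $e^{-it\sqrt{-E}}$ modulo $C^\infty$. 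Because $V$ is a smooth zero-order potential, $\sqrt{-E}$ is a classical first-order pseudodifferential operator on $\Sigma$ with the same principal symbol $\mathfrak{h}(y,\eta)=\sqrt{h^{\alpha\beta}(y)\eta_\alpha\eta_\beta}$ as $\sqrt{-\Delta_h}$; its subprincipal and lower-order symbols differ, but this affects only the amplitude $\mathfrak{a}$, not the phase or the weight.

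Next I would invoke the construction of \cite{CLV}, which produces, for a first-order elliptic positive self-adjoint $\Psi$DO with real principal symbol $\mathfrak{h}$, the kernel of the corresponding propagator $e^{-it\,\text{Op}}$ as a single global oscillatory integral of exactly the form \eqref{oscillatory integral static theorem}, with a phase $\varphi\in\mathcal{L}_\mathfrak{h}$ parametrising the Lagrangian $\Lambda_{\mathfrak{h},t}$ (twisted graph of the Hamiltonian flow of $\mathfrak{h}$), the cutoff $\chi_\mathfrak{h}$ localising near the flowout, the geometric weight $w$ of \eqref{definition weight}, and a polyhomogeneous amplitude $\mathfrak{a}\in C^\infty(\mathbb{R};S^0_{\mathrm{ph}}(T'\Sigma))$ whose homogeneous components $\mathfrak{a}_{-j}$ solve a hierarchy of transport ODEs along the Hamiltonian flow with initial condition $\mathfrak{a}_0|_{t=0}=1$. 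The point is that this construction is purely symbolic: it depends only on the full symbol of the operator $\sqrt{-E}$ and on the Hamiltonian flow of its principal part, and both $\sqrt{-\Delta_h}$ and $\sqrt{-E}$ share the same $\mathfrak{h}$. Hence the same $\varphi$, $\chi_\mathfrak{h}$, $w$ work verbatim; only the transport equations for the $\mathfrak{a}_{-j}$, $j\ge1$, acquire contributions from the lower-order symbol of $\sqrt{-E}$, which involves $V$. One then checks, as in \cite{CLV}, that the resulting oscillatory integral solves $(-i\partial_t+\sqrt{-E})U(t)=0$ modulo $C^\infty$ and satisfies $U(0)=\mathrm{Id}$ modulo $C^\infty$ on $L^2_+(\Sigma)$, so by uniqueness of the solution to this $\Psi$DO initial value problem modulo smoothing, it agrees with $e^{-it\sqrt{-E}}\,\Pi^+$ modulo $C^\infty$; adding back the $\Pi^-$ kernel gives \eqref{integral kernel static}. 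The normalisation $\arg[\ldots]^{1/4}|_{t=0}=0$ is consistent because at $t=0$ the flow is the identity and one may take a phase with $\varphi_{x^\alpha\eta_\beta}|_{t=0}=\delta^\alpha_\beta$, so that $\mathrm{Op}(w\,\mathfrak{a})|_{t=0}$ has symbol $1$, matching $\Pi^+=\mathrm{Id}\mod C^\infty$ on $L^2_+$.

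The main obstacle is not the adaptation of \cite{CLV} itself — that is essentially a black box once one notes the principal symbols coincide — but rather making precise and checking the two claims that glue everything together: (i) that $\sqrt{-E}$, defined via functional calculus off the finitely many nonpositive eigenvalues and with a branch cut away from the spectrum, is genuinely a classical first-order $\Psi$DO on $\Sigma$ (this uses Seeley-type complex powers together with the observation that modifying the operator on the finite-dimensional space $L^2_-(\Sigma)$ changes it by a smoothing operator, so the ambiguity in the branch is irrelevant modulo $C^\infty$, as already remarked in the text); and (ii) the uniqueness-modulo-$C^\infty$ statement for the initial value problem \eqref{approximation U properties}, i.e.\ that any two parametrices agree modulo smoothing — this follows from the standard energy/propagation-of-singularities argument, since the difference solves the homogeneous equation with smooth initial data and the operator is (essentially) self-adjoint with real principal symbol. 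I would also need to verify the convergence/asymptotic-summation of the series $\mathfrak{a}\sim\sum_j\mathfrak{a}_{-j}$ and the fact that the oscillatory integral \eqref{oscillatory integral static theorem} is well defined as a distribution in $(t,x,y)$ and smooth away from the characteristic set, but these are routine given the nondegeneracy of $\varphi$ on $\mathfrak{C}_\varphi$ and $\mathrm{Im}\,\varphi\ge0$, exactly as in \cite{LSV,CLV}.
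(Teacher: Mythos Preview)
Your proposal is correct, but it takes a different route from the paper. You reduce to a first-order problem: you argue that $\sqrt{-E}$ is a classical first-order $\Psi$DO (via Seeley), then run the \cite{CLV} machinery for the half-wave operator $-i\partial_t+\sqrt{-E}$, so that the transport hierarchy for the $\mathfrak{a}_{-j}$ involves the full symbol of $\sqrt{-E}$. The paper instead avoids $\sqrt{-E}$ altogether and works directly with the \emph{second-order differential} operator $\partial_t^2-E$: it characterises the desired $u$ by three conditions --- $(\partial_t^2-E)u\in C^\infty$, $u|_{t=0}=\mathrm{Id}$ modulo $C^\infty$, and a one-sidedness condition $\mathcal{F}^{-1}_{t\to\lambda}[\psi u]=O(|\lambda|^{-\infty})$ as $\lambda\to-\infty$ --- and then constructs such a $u$ by the algorithm of acting with $\partial_t^2-E$ on the oscillatory ansatz and reducing the amplitude. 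The frequency-positivity condition replaces your appeal to the first-order equation as the device that singles out the $e^{-it\sqrt{-E}}$ branch. The practical payoff of the paper's route is that the transport equations only involve the symbol of the differential operator $E$, not of the pseudodifferential $\sqrt{-E}$, so one never needs to compute the latter; your route is conceptually cleaner in tying directly to the half-wave group but costs you Seeley's theorem and the (in principle recursive) symbol of $\sqrt{-E}$. Both arguments ultimately rest on the same uniqueness-modulo-$C^\infty$ ingredient and the same global FIO calculus from \cite{LSV,CLV}.
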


\begin{remark}
Let us point out that the weight \eqref{definition weight} is a smooth scalar function in the variables $t$, $x$ and $\eta$ and a smooth $(-1)$-density in the variable $y$. The choice of powers of the Riemannian density serves the purpose of making the integral kernel \eqref{oscillatory integral static theorem} a scalar function in all variables. The weight $w$ is a crucial element in our construction in that it ensures the right covariance properties of \eqref{oscillatory integral static theorem} and its nondegeneracy -- a consequence of using complex-valued phase functions of class $\mathcal{L}_\mathfrak{h}$ -- is key to performing a construction global in time, circumventing topological obstructions offered by caustics.
\end{remark}

\begin{proof}[Proof of Theorem~\ref{main theorem static propagator}]
Denote by
\[
\mathcal{F}_{\lambda\to t}[f](t)=\widehat{f}(t)=\int_{-\infty}^{+\infty}e^{-it\lambda}f(\lambda)\,d\lambda
\]
the Fourier transform and by
\[
\mathcal{F}^{-1}_{t\to \lambda}[\widehat f](t):=f(\lambda)=\frac{1}{2\pi}\int_{-\infty}^{+\infty}e^{it\lambda}\widehat{f}(t)\,dt
\]
the inverse Fourier transform.

Assume one has constructed $u\in C^\infty(\mathbb{R}; \mathcal{D}'(\Sigma_x\times \Sigma_y))$ such that 
\begin{enumerate}
\item $
(\partial_t^2-E^{(x)})u(t,x,y)=0  \mod C^\infty,
$
where the superscript $^{(x)}$ indicates that $E$ acts in the variable $x$;

\item $u$ satisfies
\[
\int_\Sigma u(0,x,y)\,\left(\,\cdot\,\right)\,\rho(y)\,dy=\mathrm{Id} \qquad \mod C^\infty;
\]

\item for every $\psi\in C^\infty_0(\mathbb{R})$,
\[
\mathcal{F}^{-1}_{t\to \lambda}[\psi(t)u(t,x,y)]=O(|\lambda|^{-\infty}) \qquad \text{as }\lambda\to-\infty.
\]
\end{enumerate}
How to do so will be explained in Subsection~\ref{the algorithm}. Then our theorem follows from an adaptation of results from \cite{CLV}.
\end{proof}

\begin{corollary}
Let $M$ be a static spacetime. Then, the integral kernel of the operators $\mathcal{G}_{0,\overline{t}}$ and $\mathcal{G}_{1,\overline{t}}$ defined in accordance with Proposition~\ref{Prop: initial data to solutions} can be represented, modulo $C^\infty$, as the sum of two oscillatory integrals of the form \eqref{oscillatory integral static theorem}, global in space and time.
\end{corollary}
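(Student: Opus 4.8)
The plan is to express $\mathcal{G}_{0,\overline{t}}$ and $\mathcal{G}_{1,\overline{t}}$ through the functional calculus of $\sqrt{-E}$, to reduce their analysis (via Subsection~\ref{Sec: Reduction to the ultrastatic case}) to the ultrastatic operator $\partial_t^2-E=\Box_g+V$ of \eqref{new hyperbolic operator}, and then to invoke Theorem~\ref{main theorem static propagator} for the half-wave propagators that appear. On a static spacetime, Proposition~\ref{Prop: initial data to solutions} together with the conformal rescaling of \eqref{relation between box tilde and box} identifies $\mathcal{G}_{0,\overline{t}}$ and $\mathcal{G}_{1,\overline{t}}$, up to conjugation by smooth strictly positive multiplication operators ({\it cf.}\ \eqref{relation between adcanved and retarded static}), with the corresponding operators for $\partial_t^2-E$; for the latter, the classical representation of the solution of the wave equation gives, modulo operators with $C^\infty$ Schwartz kernel,
\[
\mathcal{G}_{0,\overline{t}}=\cos\!\big((t-\overline{t})\sqrt{-E}\big),\qquad
\mathcal{G}_{1,\overline{t}}=\frac{\sin\!\big((t-\overline{t})\sqrt{-E}\big)}{\sqrt{-E}},
\]
the ambiguity of $\sqrt{-E}$ on the finite-dimensional space $L^2_-(\Sigma)$ affecting these identities only by a finite-rank, hence infinitely smoothing, operator, exactly as in the discussion preceding Theorem~\ref{main theorem static propagator}.

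I would then split each of these into half-wave propagators,
\[
\cos\!\big((t-\overline{t})\sqrt{-E}\big)=\tfrac12 e^{-i(t-\overline{t})\sqrt{-E}}+\tfrac12 e^{+i(t-\overline{t})\sqrt{-E}},\qquad
\frac{\sin\!\big((t-\overline{t})\sqrt{-E}\big)}{\sqrt{-E}}=\frac{(-E)^{-1/2}}{2i}\Big(e^{+i(t-\overline{t})\sqrt{-E}}-e^{-i(t-\overline{t})\sqrt{-E}}\Big),
\]
and apply Theorem~\ref{main theorem static propagator}. Since the oscillatory integral \eqref{oscillatory integral static theorem} represents $e^{-is\sqrt{-E}}$ modulo $C^\infty$ \emph{for every} $s\in\mathbb{R}$, it covers the summand $e^{-i(t-\overline{t})\sqrt{-E}}$ directly; the summand $e^{+i(t-\overline{t})\sqrt{-E}}$ has kernel $u(\overline{t}-t,x,y)$, which is therefore again an oscillatory integral of the form \eqref{oscillatory integral static theorem}, now with phase function $\varphi(\overline{t}-t,x;y,\eta)$ -- of class $\mathcal{L}$ with respect to the reversed bicharacteristic flow, still with nonnegative imaginary part and nondegenerate on the relevant critical set. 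This already yields the claim for $\mathcal{G}_{0,\overline{t}}$: modulo $C^\infty$ it is the sum of exactly two such oscillatory integrals.

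For $\mathcal{G}_{1,\overline{t}}$ one has in addition the factor $(-E)^{-1/2}$, which -- defined via functional calculus on $L^2_+(\Sigma)$ and trivially on $L^2_-(\Sigma)$ -- coincides, modulo $C^\infty$, with a classical pseudodifferential operator of order $-1$; composing it with $e^{\mp i(t-\overline{t})\sqrt{-E}}$ (with which it commutes) leaves the underlying canonical relation, hence the phase function and the weight, unchanged and merely lowers the order of the amplitude by one, and an order $-1$ polyhomogeneous symbol is in particular a (vanishing-leading-term) order $0$ polyhomogeneous symbol. Undoing the conformal reduction then amounts to multiplying the resulting kernels, on the left in the variable $x$ and on the right in the variable $y$, by smooth strictly positive functions: the $y$-factor is absorbed directly into $\mathfrak{a}(t;y,\eta)$, while the $x$-factor, not being of that form, is split as its value at $x=x^\ast(t;y,\eta)$ plus a remainder vanishing on $\mathfrak{C}_\varphi$, the latter contributing -- after an integration by parts using $\varphi_{\eta_\beta}e^{i\varphi}=-i\partial_{\eta_\beta}e^{i\varphi}$ and iteration -- only lower-order contributions that are reabsorbed into a single amplitude of the same form (alternatively, these powers of $\beta$ may be built into the weight $w$). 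Hence $\mathcal{G}_{1,\overline{t}}$ too is, modulo $C^\infty$, a sum of two oscillatory integrals of the required type.

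I expect the main obstacle to be precisely this last bookkeeping: verifying that composition with $(-E)^{-1/2}$ and, above all, with the $x$-dependent multiplication operators coming from the static-to-ultrastatic reduction preserves the rigid structure of \eqref{oscillatory integral static theorem} -- in particular that the amplitude there depends only on $(t,y,\eta)$ and that $w$ keeps its prescribed density weights and branch normalisation. Checking that the reversed-flow half-wave propagator is an admissible oscillatory integral (a complex phase of class $\mathcal{L}$ for the backward flow) is routine but should be spelled out; everything else is standard Fourier integral operator calculus on top of Theorem~\ref{main theorem static propagator}.
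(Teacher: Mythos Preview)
Your proposal is correct and follows essentially the same route as the paper: reduce to the ultrastatic operator $\partial_t^2-E$, express $\mathcal{G}_{0,\overline{t}}$ and $\mathcal{G}_{1,\overline{t}}$ via functional calculus as $\cos\big((t-\overline{t})\sqrt{-E}\big)$ and $\sin\big((t-\overline{t})\sqrt{-E}\big)(\sqrt{-E})^{-1}$ (plus a finite-rank correction on $\ker E$, which you correctly treat as smoothing), split into half-wave propagators, and invoke Theorem~\ref{main theorem static propagator} together with the conjugation \eqref{relation between adcanved and retarded static}. The paper's proof is extremely terse and does not spell out the bookkeeping you worry about---the reabsorption of the $x$-dependent multiplicative factors and of the order-lowering $(-E)^{-1/2}$ into a symbol of the form $\mathfrak{a}(t;y,\eta)$---but this is precisely what the amplitude-to-symbol machinery of Subsection~\ref{the algorithm} is built for, so your concern is legitimate and your sketch of how to resolve it is the intended one.
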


\begin{proof}
Let $E$ be given by \eqref{operator E} for the choice of $V$ \eqref{pontential V static}. The operators $\mathcal{G}_{0,\overline{t}}$ and $\mathcal{G}_{1,\overline{t}}$ associated with the normally hyperbolic operator $\partial_t^2-E$ on the ultrastatic spacetime $\mathbb{R}\times \Sigma$ are given by
\[
\mathcal{G}_{0,\overline{t}}=\cos(\sqrt{-E}(t-\overline{t})),\quad\mathcal{G}_{1,\overline{t}}=\sin(\sqrt{-E}(t-\overline{t}))(\sqrt{-E})^{-1}+t\,\Pi^0,
\]
where $\Pi^0$ is the projection onto the kernel of $E$, see \cite[Section~1]{CLV} and \cite[Chap. 3]{Fulling}. Hence, the statement follows from Theorem~\ref{main theorem static propagator} combined with \eqref{relation between adcanved and retarded static}.
\end{proof}

Observe that, unlike in the construction of the half-wave propagator, there is no ambiguity in defining $\mathcal{G}_{0,\overline{t}}$ and $\mathcal{G}_{1,\overline{t}}$ by means of spectral calculus even if the spectrum of $-E$ has non-empty intersection with the negative real line. The reason lies in the fact that the functions $\cos(x)$ and $\frac{\sin(x)}{x}$ are even.

\subsubsection{The algorithm}
\label{the algorithm}

In the remainder of this subsection, we will explain how to construct the oscillatory integral in the RHS of \eqref{oscillatory integral static theorem}, adapting \cite[Section~5]{CLV} to the case in hand. 

The starting main idea consists in fixing a distinguished phase function and setting out an algorithm for the construction of $\mathfrak{a}$ in a global, invariant fashion. 

\begin{definition}[Levi-Civita phase function {\cite[Definition~4.1]{CLV}}]
We define the \emph{Levi-Civita phase function} to be
\begin{equation}
\label{Eq: Levi-Civita phase function ultrastatic}
\varphi_\epsilon(t,x;y,\eta):=-\frac{1}{2} \langle{\xi^*(t;y,\eta)},{\left. \operatorname{grad}_z [\operatorname{dist}_\Sigma^2(x,z)]\right|_{z=x^*(t;y,\eta)}}\rangle
+\dfrac{i \, \epsilon }{2} \,\mathfrak{h}(y,\eta) \,
\operatorname{dist}_\Sigma^2(x,x^\ast(t;y,\eta))
\end{equation}
when $x$ lies in a $\Sigma$-geodesic neighbourhood of $x^*(t;y,\eta)$ and continued smoothly and arbitrarily elsewhere, in such a way that $\mathrm{Im}(\varphi)\ge 0$. Here $\operatorname{dist}_\Sigma$ is the (Riemannian) geodesic distance on $\Sigma$, $\langle \,\cdot\,,\,\cdot\,\rangle=h^{-1}(\,\cdot\,,\,\cdot\,)$ and $\epsilon>0$ is a positive parameter pre-multiplying the imaginary part of $\varphi$.
\end{definition}

\noindent The symbol $\mathfrak{a}$ is determined as follows.

\vskip .2cm

\textbf{Step 1}. Set $\chi\equiv 1$ --- contributions from regions where $\chi\ne1$ are smooth, as one can establish via a stationary phase argument --- and act on the oscillatory integral \eqref{oscillatory integral static theorem} with the operator $\partial_t^2-E^{(x)}$. This yields a new oscillatory integral
\begin{equation}
I_\varphi(a):=\dfrac{1}{(2\pi)^{d-1}}\int_{T^*_y\Sigma} e^{i \,\varphi(t,x;y,\eta)}\,a(t,x;y,\eta) \,w(t,x;y,\eta)\,d \eta
\end{equation}
which has the same form of  \eqref{oscillatory integral static theorem} though with a new amplitude
\begin{equation}\label{Eq:a-amplitude}
a(t,x;y,\eta):=\dfrac{e^{-i\,\varphi}}{w} \,(\partial_t^2-E) \left[e^{i \,\varphi}\, \mathfrak{a}\, w\right].
\end{equation}
Observe that the amplitude $a\in C^\infty(\mathbb{R}\times \Sigma; S^2_{\mathrm{ph}}(T'\Sigma))$ depends on the variable $x$.

\vskip .2cm

\textbf{Step 2}. As a next step, we exclude the dependence on $x$ of the amplitude $a$, namely, we find $\mathfrak{b}\in C^\infty(\mathbb{R};S^2_{\mathrm{ph}}(T^\prime \Sigma))$ such that $I_\varphi(a)=I_\varphi(\mathfrak{b})$ mod $C^\infty$. 
This is achieved by the so-called \emph{reduction of the amplitude}, which exploits suitably devised differential-evaluation operators. For every $\alpha=1,...,d-1$, put
\begin{gather}
L_\alpha:C^\infty(\mathbb{R}\times \Sigma;S^j_{\mathrm{ph}}(T^\prime\Sigma))\to C^\infty(\mathbb{R}\times \Sigma;(S^j_{\mathrm{ph}}T^\prime\Sigma)),\notag\\
L_\alpha:=\left[(\varphi_{x\eta})^{-1}\right]_\alpha{}^\beta\,\dfrac{\partial}{\partial x^\beta},\label{operator L}
\end{gather}
where $\varphi_{x\eta}^{-1}$ is defined in accordance with
\[
[\varphi^{-1}_{x\eta}]_\alpha{}^\beta\,\varphi_{x^\beta\eta_\gamma}=\delta_\alpha{}^\gamma.
\]
In addition, for all $k\in\mathbb{N}_0$, let $\mathfrak{S}_{-k}:C^\infty(\mathbb{R}\times \Sigma;S^j_{\mathrm{ph}}(T^\prime\Sigma))\to C^\infty(\mathbb{R};S^{j-k}_{\mathrm{ph}}(T^\prime\Sigma))$ be defined as
\begin{subequations}\label{operators mathfrak S}
\begin{gather}
\mathfrak{S}_0 f:=f|_{x=x^*(t;y,\eta)}\,, \label{mathfrak S0}\\
\mathfrak{S}_{-k}f:=\mathfrak{S}_0 \left[ i \, w^{-1} \frac{\partial}{\partial \eta_\beta}\, w \left( 1+ \sum_{1\leq |\boldsymbol{\alpha}|\leq 2k-1} \dfrac{(-\varphi_\eta)^{\boldsymbol{\alpha}}}{\boldsymbol{\alpha}!\,(|\boldsymbol{\alpha}|+1)}\,L_{\boldsymbol{\alpha}} \right) L_\beta  \right]^k f\,, \quad k>0.\label{mathfrak Sk}
\end{gather}
\end{subequations}
Bold Greek letters in \eqref{mathfrak Sk} denote multi-indices in $\mathbb{N}^{n-1}_0$, $\boldsymbol{\alpha}=(\alpha_1, \ldots, \alpha_{n-1})$, $|\boldsymbol{\alpha}|=\sum_{j=1}^{n-1} \alpha_j$ and $(-\varphi_\eta)^{\boldsymbol{\alpha}}:=(-1)^{|\boldsymbol{\alpha}|}\, (\varphi_{\eta_1})^{\alpha_1}\dots ( \varphi_{\eta_{n-1}})^{\alpha_{n-1}}$. All derivatives act on whatever is to the right, unless otherwise specified. The operator
\eqref{mathfrak Sk} is well defined,
because the differential operators $L_\alpha$ commute, {\it cf.} \cite[Lemma A.2]{CLV}.

Denoting by
$
a \sim \sum_{j=0}^\infty a_{2-j}
$
the asymptotic polyhomogeneous expansion of $a$, we construct a polyhomogeneous symbol $\mathfrak{b}$ whose homogeneous components are defined as
\begin{equation}\label{construction homogeneous components of the reduced amplitude}
\mathfrak{b}_l:=\sum_{2-j-k=l} \mathfrak{S}_{-k}\,a_{2-j},\qquad
l=2,1,0,-1,\ldots.
\end{equation}
This gives \cite[Appendix A]{CLV}
\begin{equation}
\dfrac{1}{(2\pi)^{d-1}}\int_{T^*_y\Sigma} e^{i \,\varphi}\,a \,w\,d \eta=\dfrac{1}{(2\pi)^{d-1}}\int_{T^*_y\Sigma} e^{i \,\varphi}\,\mathfrak{b} \,w\,d \eta\,\,\mod C^\infty.
\end{equation}
We adopt the terminology from \cite{CLV} and call the operator $\mathfrak{S} \sim \sum_{k=0}^{\infty} \mathfrak{S}_{-k}\,$ \emph{amplitude-to-symbol operator}. 
Clearly, the symbol $\mathfrak{b}$ is $x$-independent.

\vskip .2cm

\textbf{Step 3.} Impose
$$
(\partial_t-E^{(x)})u=I_\varphi(\mathfrak{b})=0 \mod C^\infty.
$$
In view of equations \eqref{Eq:a-amplitude} and \eqref{construction homogeneous components of the reduced amplitude}, this amounts to equating each asymptotic homogeneous component of $\mathfrak{b}$ to zero:
\begin{equation}\label{ultrastatic transport equations}
\mathfrak{b}_{l}(t;y,\eta)=0,\quad l=2,1,0,-1,....
\end{equation}
This yields a hierarchy of transport equations -- ordinary differential equations in the variable $t$ -- whose unknowns are the (scalar) homogeneous components $\mathfrak{a}_{-k}$ of $\mathfrak{a}$. 

Initial conditions $\mathfrak{a}_{-k}(0;y,\eta)$ for \eqref{ultrastatic transport equations} are established by requiring that $u(0,x,y)$ is, modulo a smooth function, the integral kernel of the identity operator.

Note that the identity operator appears in our construction as an invariant pseudodifferential operator with integral kernel
\begin{equation}
\label{identity static}
\int_{T^*_y\Sigma} e^{i \,\phi(x;y,\eta)}\,\mathfrak{s}(y,\eta)\,\chi_\mathfrak{h}(0,x;y,\eta)\,\left[ {\det} ^2 \phi_{x\eta} \right]^{1/4}\,\left(\dfrac{\rho_h(y)}{\rho_h(x)}\right)^{1/2}\,d\eta \mod C^\infty,
\end{equation}
where $\phi(x;y,\eta):=\varphi(0,x;y,\eta)$ and $\mathfrak{s}\in S^0_{\mathrm{ph}}(T'\Sigma)$, so that the initial conditions read
\[
\mathfrak{a}_{-k}(0;y,\eta)=\mathfrak{s}_{-k}(y,\eta).
\]
We refer the reader to \cite[Section~6]{CLV} for a detailed analysis of the operator \eqref{identity static}.

\

\noindent The above algorithm identifies uniquely and invariantly an element $u\in C^\infty(\mathbb{R}; \mathcal{D}'(\Sigma_x\times \Sigma_y))$  satisfying conditions 1.--3. in the proof of Theorem~\ref{main theorem static propagator}.

\section{Extension to general globally hyperbolic spacetimes}
\label{Sec: Extension to general globally hyperbolic spacetimes}

In this section we generalise the construction of the global propagator with FIO methods to the case of a general globally hyperbolic spacetime $(M,g)$ with compact Cauchy surface.

For a general global hyperbolic spacetime, the strategy presented in Section~\ref{Sec: The wave propagator in static spacetimes} does not work any more, because space and time intertwine in an essential way,  and one cannot identify a time-independent analogue of the operator $E$. In order to construct the wave propagator, one needs to abandon the operator \eqref{operator U without plus static} and adopt a more abstract approach.


The first crucial step towards our goal rests in the identification of a Lorentzian analogue of the Levi-Civita phase function, {\it cf.}~Section~\ref{Subsec: the algoritm}.

\subsection{The Lorentzian Levi-Civita phase function}

Let $(M,g)$ be a globally hyperbolic spacetime with compact Cauchy surface of dimension $d \ge 3$, realised as $\mathbb{R}\times\Sigma$, {\it cf.} Theorem \ref{Th:glob_hyp}, and let $\Sigma_s\simeq \{s\}\times \Sigma$ be an arbitrary but fixed Cauchy surface.

\begin{notation}
\label{notation on momenta}
Let $Y=(s,y)\in M$ and let $\iota_s:\Sigma_s \to M$ be the embedding of $\Sigma_s$ into $M$. Given $\eta\in T^*_y\Sigma$ we denote by $\eta_+$ the unique future pointing null covector in $T^*_YM$ such that $\iota^*_s \eta_+=\eta$. Furthermore, we put $\widehat{\eta}_+:=\frac{\eta_+}{\|\eta\|_{\Sigma_s}}$, where $\|\eta\|_{\Sigma_s}=\sqrt{h_s^{\alpha\beta}(y)\,\eta_\alpha\eta_\beta}$ and $h_s:=\iota_{\Sigma_s}^*g$.
\end{notation}

\begin{definition}
Let $Y=(s,y)\in M$. We call \emph{Levi-Civita flow} the map
\begin{align}\label{Eqn: Lorentzian flow}
	\varsigma\mapsto(\widetilde{X}^*(\varsigma;s,y,\eta),\widetilde{\Xi}^*(\varsigma;s,y,\eta)),
\end{align}
where
\begin{itemize}
\item
$\widetilde{X}^*(\,\cdot\,;s,y,\eta): \varsigma \mapsto \widetilde{X}^*(\varsigma;s,y,\eta)$ is the unique null geodesic stemming from $Y$ with initial cotangent vector $\widehat{\eta}_+$, parameterised by proper time;

\item
$\widetilde{\Xi}^*(\varsigma;s,y,\eta)$ is the parallel transport along $\widetilde{X}^*(\,\cdot\,;s,y,\eta)$ of $\eta_+$, from $Y$ to $\widetilde{X}^*(\varsigma;s,y,\eta)$.
\end{itemize}
\end{definition}

The Levi-Civita flow satisfies, by construction,
\[
(\widetilde{X}^*(s;s,y,\eta), \widetilde{\Xi}^*(s;s,y,\eta))=(Y,\eta_+)
\]
and
\begin{equation}
\label{homogeneity Levi-Civita flow}
(\widetilde{X}^*(\varsigma;s,y,\lambda\,\eta),\widetilde{\Xi}^*(\varsigma;s,y,\lambda\,\eta))=(\widetilde{X}^*(\varsigma;s,y,\eta),\lambda\,\widetilde{\Xi}^*(\varsigma;s,y,\eta)),
\end{equation}
for every $\lambda>0$.

We have the following standard result.

\begin{lemma}\label{Lem: reparametrization of Lorentzian flow}
	The Levi-Civita flow can be parameterised using the global time coordinate $t$ defined in accordance with Theorem~\ref{Th:glob_hyp} (ii).
\end{lemma}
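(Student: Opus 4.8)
The plan is to show that the null geodesic $\varsigma \mapsto \widetilde{X}^*(\varsigma;s,y,\eta)$ is transverse to the Cauchy surfaces $\Sigma_\tau = \{\tau\}\times\Sigma$, so that the global time coordinate $t$ (restricted to the geodesic) is a strictly monotone function of the proper-time parameter $\varsigma$, hence can be inverted. First I would observe that, by Theorem~\ref{Th:glob_hyp}(ii), each $\Sigma_\tau$ is a spacelike Cauchy surface, so its future-directed unit normal $n$ is timelike, and the tangent $\dot{\widetilde{X}}^*$ of the null geodesic is a future-pointing null vector. Since a timelike vector and a (nonzero) null vector can never be orthogonal, we have $g(n,\dot{\widetilde{X}}^*) \neq 0$ along the geodesic; equivalently, $\frac{d}{d\varsigma}\,t(\widetilde{X}^*(\varsigma;s,y,\eta)) \neq 0$ everywhere. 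Because the geodesic is future-pointing and $t$ increases towards the future, this derivative is in fact strictly positive.

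Next I would invoke completeness of the time function: in the Bernal--Sanchez splitting the coordinate $t$ ranges over all of $\mathbb{R}$ on every inextensible causal curve, and a maximally extended future-directed null geodesic in a globally hyperbolic spacetime is inextensible, so it meets every $\Sigma_\tau$. (One can alternatively argue that $\widetilde{X}^*$ is defined on a maximal interval of $\varsigma$ and that, were the image of $t$ bounded, the geodesic would be imprisoned in a compact slab $[\tau_1,\tau_2]\times\Sigma$, contradicting inextensibility together with global hyperbolicity.) Consequently $\varsigma \mapsto t(\widetilde{X}^*(\varsigma;s,y,\eta))$ is a smooth, strictly increasing bijection of its domain onto $\mathbb{R}$, with smooth inverse by the inverse function theorem. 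Composing the Levi-Civita flow with this inverse reparameterises $\widetilde{X}^*$ and $\widetilde{\Xi}^*$ by $t$; smooth dependence on the parameters $(s,y,\eta)$ is preserved since the reparameterisation map inherits smooth dependence on $(s,y,\eta)$ from smooth dependence of solutions of the geodesic equation on initial data.

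The main obstacle, and the point deserving care, is the global (as opposed to merely local) surjectivity of $\varsigma \mapsto t(\widetilde{X}^*(\varsigma))$ onto $\mathbb{R}$: this is exactly where global hyperbolicity enters, since on a non-globally-hyperbolic spacetime a null geodesic can fail to be future/past complete in the required sense or can be imprisoned. I would handle this by citing the standard fact (e.g.\ \cite{Bar:2007zz,Wald}) that inextensible causal curves in a globally hyperbolic spacetime cross every Cauchy surface exactly once, applied to the inextensible null geodesic $\widetilde{X}^*$; the ``exactly once'' part also re-proves monotonicity. Strict monotonicity together with this surjectivity gives the desired global reparameterisation, completing the proof.
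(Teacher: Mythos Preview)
Your proposal is correct and its core argument coincides with the paper's: both establish that $t$ is strictly monotone along the null geodesic by observing that a nonzero null tangent vector cannot have vanishing $dt$-component. The paper phrases this directly via the block form $g=\beta\,dt^2-h_t$ of the metric (if $\dot\tau^*=0$ and the tangent is null then the spatial part must vanish too, contradicting $\eta\neq 0$), whereas you phrase it as non-orthogonality of a null vector to the timelike normal $n$; these are the same observation.

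Where your write-up differs is in scope: the paper's proof stops at $\frac{d\tau^*}{d\varsigma}\neq 0$ and says nothing further. You go on to address global surjectivity of $\varsigma\mapsto t(\widetilde X^*(\varsigma))$ onto $\mathbb{R}$ (via inextensibility and the fact that inextensible causal curves meet every Cauchy surface), as well as smooth dependence on $(s,y,\eta)$. These additions are sound and arguably needed for the way the lemma is used later (the propagator is constructed globally in $t$), but the paper treats the lemma as a standard remark and leaves all of this implicit. So: same approach, your version is simply more thorough.
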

\begin{proof}
	Let $\widetilde{X}^*(\varsigma;s,y,\eta)=(\tau^*(\varsigma;s,y,\eta),x^*(\varsigma;s,y,\eta))$, where 
	\[
	\tau^*(\varsigma;s,y,\eta):=t(\widetilde{X}^*(\varsigma;s,y,\eta))
	\] and 
	\[
	x^*(\varsigma;s,y,\eta):=\pi_{\Sigma_{\tau^*(\varsigma;s,y,\eta)}}(\widetilde{X}^*(\varsigma;s,y,\eta))
	\]
	is the projection of $\widetilde{X}^*(\varsigma;s,y,\eta)$ onto $\Sigma_{\tau^*(\varsigma;s,y,\eta)}$.
	Denoting by $\frac{\mathrm{d}\widetilde{X}^*}{\mathrm{d}\varsigma}$ the tangent vector to $\widetilde{X}^*(\,\cdot\,;s,y,\eta)$, we have $g(\frac{\mathrm{d}\widetilde{X}^*}{\mathrm{d}\varsigma},\frac{\mathrm{d}\widetilde{X}^*}{\mathrm{d}\varsigma})=g^{-1}(\widehat{\eta}_+,\widehat{\eta}_+)=0$.
	Since $\eta\in T'\Sigma$, this implies that $\frac{\mathrm{d}\tau^*}{\mathrm{d}\varsigma}$ cannot vanish.
\end{proof}

In view of Lemma~\ref{Lem: reparametrization of Lorentzian flow}, in what follows we shall denote with $({X}^*(t;s,y,\eta), {\Xi}^*(t;s,y,\eta))$ the Levi-Civita flow parameterised by $t$.

We are now in a position to define our Lorentzian phase function.

\begin{definition}\label{Def: phase function for Lorentzian manifolds}
	Let $\epsilon>0$ be a positive parameter, let $X=(t,x), Y=(s,y)\in M$ and let $(X^*(t;s,y,\eta)\,,\,\Xi^*(t;s,y,\eta))$ be the Levi-Civita flow.
	We define \emph{the Lorentzian Levi-Civita phase function} to be the infinitely smooth function $\varphi: M\times \mathbb{R}\times T'\Sigma \to \mathbb{C}$ defined by
	\begin{multline}\label{Eq: phase function for Lorentzian manifolds}
		\varphi(\tau,x;s,y,\eta): =
		- \langle\Xi^*(\tau;s,y,\eta)\,,\,\operatorname{grad}_Z \sigma(X,Z)|_{Z=X^*(\tau;s,y,\eta)}  \rangle\\ +
		{i \,\epsilon}\|\eta\|_{\Sigma_s}\sigma(X,X^*(\tau;s,y,\eta))\,,
	\end{multline}
	if $X$ lies in a geodesic normal neighbourhood of  $X^*(\tau;s,y,\eta)$, and smoothly continued elsewhere in such a way that the imaginary part is positive.
	Here $\sigma$ is the Ruse--Synge world function, defined in accordance with \eqref{eq:Synge_world_function}.
\end{definition}
\begin{remark}
	Observe that the Lorentzian Levi-Civita phase function $\varphi$ can be equivalently recast as
	\begin{multline}\label{Eqn: Lorentzian phase function}
	\varphi(\tau,x;s,y,\eta)
	=- \frac12 \langle \iota_{\Sigma_\tau}^*\Xi^*(\tau;s,y,\eta)\,,\,\left. \operatorname{grad}_z(\operatorname{dist}_{\Sigma_\tau}(x,z)^2)\right|_{z=x^*(\tau;s,y,\eta)}  \rangle\\+
	\dfrac{i \,\epsilon}{2}\|\eta\|_{\Sigma_s}\operatorname{dist}^2_{\Sigma_\tau}(x,x^*(\tau;s,y,\eta)),
	\end{multline}
	where $\operatorname{dist}_{\Sigma_\tau}$ is the (Riemannian) geodesic distance on $\Sigma_\tau:=\{\tau\}\times \Sigma\simeq\Sigma$.
	Whenever $(M,g)$ is ultrastatic the latter equation coincides with equation \eqref{Eq: Levi-Civita phase function ultrastatic} -- in which case $\|\eta\|_{\Sigma_\tau}=\mathfrak{h}(y,\eta)$ for every $\tau$.
\end{remark}
The following proposition tells us that Definition~\ref{Def: phase function for Lorentzian manifolds} identifies a function $\varphi$ with essentially the same properties of a phase function of class $\mathcal{L}_{\mathfrak{h}}$ -- \textit{cf.} Definition~\ref{Def: phase function (of class Lh)}.

\begin{proposition}
	The Lorentzian Levi-Civita phase function \eqref{Eq: phase function for Lorentzian manifolds} satisfies the following properties:
	\begin{enumerate}[(a)]
		\item
		The function $\varphi$ is positively homogeneous of degree one in momentum, {\it i.e.} 
		\[
		\varphi(\tau,x;s,y,\lambda\eta)=\lambda\,\varphi(\tau,x;s,y,\eta),\qquad \forall\lambda>0.
		\]
		\item
		$\operatorname{Im}\varphi\geq 0$ and, moreover,
		\begin{subequations}\label{Eqn: properties of the phase function}
			\begin{align}
				\label{Eqn: phi vanishes on the flow}
				\varphi(\tau,x^*(\tau;s,y,\eta);s,y,\eta)&=0\,,\\
				\label{Eqn: d phi is the cotangent component on the flow}
				\varphi_{x^\alpha}(\tau,x^*(\tau;s,y,\eta);s,y,\eta)&=\xi_\alpha^*(\tau;s,y,\eta)\,,\\
				\label{Eqn: phi is non-degenerate}
				\det\varphi_{x^\alpha\eta_\beta}(\tau,x^*(\tau;s,y,\eta);s,y,\eta)&\neq 0\,,
			\end{align}
			where $\xi^*:=\iota_{\Sigma_s}\Xi^*$.
		\end{subequations}
			\item
			If we define
			\begin{align*}
				\Phi&:=\{(t,x^*(t;s,y,\eta);s,y,\eta)\;|\; t,s\in\mathbb{R},(y,\eta)\in T^*\Sigma\}\,,\\
				\mathfrak{C}_\varphi&:=\{(\tau,x;s,y,\eta)\in M\times\mathbb{R}\times T'\Sigma\,\,|\,\,\varphi_\eta(s,x;y,\eta)=0 \}\,,
			\end{align*}
			then $\Phi\subseteq\mathfrak{C}_\varphi$.
			Furthermore, there exists a neighbourhood $W$ of $\Phi$ such that $\mathfrak{C}_\varphi\cap(W\setminus\Phi)=\emptyset$.
	\end{enumerate}
\end{proposition}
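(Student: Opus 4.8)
The plan is to prove the three properties in turn, reducing each to the corresponding statement in the Riemannian case handled in \cite{CLV} via the slicing $\Sigma_\tau$. For (a), homogeneity in $\eta$ is a direct consequence of the homogeneity \eqref{homogeneity Levi-Civita flow} of the Levi-Civita flow together with the linearity of $\Xi^*$ in $\eta_+$ and the factor $\|\eta\|_{\Sigma_s}$: under $\eta\mapsto\lambda\eta$ the geodesic $X^*$ is unchanged (its initial covector is the normalised $\widehat{\eta}_+$, which is insensitive to scaling), while $\Xi^*$ and $\|\eta\|_{\Sigma_s}$ each scale by $\lambda$; plugging this into \eqref{Eq: phase function for Lorentzian manifolds} shows both terms scale by $\lambda$. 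One should note that the smooth extension away from the geodesic neighbourhood must be chosen homogeneous, which is possible since the cut-off region is conic.

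For (b), I would first establish $\operatorname{Im}\varphi\ge0$: in the geodesic normal neighbourhood this is immediate from \eqref{Eqn: Lorentzian phase function}, since $\epsilon>0$, $\|\eta\|_{\Sigma_s}>0$ and $\operatorname{dist}^2_{\Sigma_\tau}\ge0$; away from it, positivity is imposed by fiat in Definition~\ref{Def: phase function for Lorentzian manifolds}. The three identities \eqref{Eqn: phi vanishes on the flow}--\eqref{Eqn: phi is non-degenerate} are all evaluated on the flow, i.e.\ at $x=x^*(\tau;s,y,\eta)$, hence inside the geodesic neighbourhood, so one may work entirely with \eqref{Eqn: Lorentzian phase function}. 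Equation \eqref{Eqn: phi vanishes on the flow} follows because $\operatorname{dist}_{\Sigma_\tau}(x^*,x^*)=0$ kills both terms. For \eqref{Eqn: d phi is the cotangent component on the flow} I would differentiate the first term of \eqref{Eqn: Lorentzian phase function} in $x$ and evaluate at $x=x^*$, using the standard identity $\tfrac12\operatorname{grad}_x[\operatorname{dist}_{\Sigma_\tau}^2(x,z)]|_{x=z}=0$ and $\tfrac12\partial_{x^\alpha}\partial_{z^\beta}[\operatorname{dist}_{\Sigma_\tau}^2(x,z)]|_{x=z}=-h_{\tau,\alpha\beta}$ (Hessian of the squared distance at the diagonal), which collapses the expression to $\iota^*_{\Sigma_\tau}\Xi^*$ contracted to an index-down covector; by definition of $\xi^*$ and the fact that $\Xi^*$ is the parallel transport of $\eta_+$ along a null geodesic, this equals $\xi^*_\alpha(\tau;s,y,\eta)$ — here one must be careful that $\xi^*$ as defined via $\iota_{\Sigma_s}\Xi^*$ at time $s$ agrees, under the $t$-reparametrisation of Lemma~\ref{Lem: reparametrization of Lorentzian flow}, with the pulled-back covector on $\Sigma_\tau$; this is precisely the statement that $\Xi^*$ is Jacobi-parallel and consistent with the Hamiltonian lift. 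For \eqref{Eqn: phi is non-degenerate} the mixed Hessian $\varphi_{x^\alpha\eta_\beta}$ on the flow, again using the Hessian-of-distance identity, reduces to $-h_{\tau,\alpha\gamma}\,\partial x^{*\gamma}/\partial\eta_\beta$ plus lower-order terms from the imaginary part; nondegeneracy then amounts to invertibility of the derivative of the flow in $\eta$, which is exactly the content of \cite[Lemma~4.2]{CLV} (or its analogue), transplanted slice-by-slice.

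For (c), the inclusion $\Phi\subseteq\mathfrak{C}_\varphi$ means $\varphi_\eta$ vanishes on the flow. I would compute $\varphi_\eta$ directly from \eqref{Eqn: Lorentzian phase function} and evaluate at $x=x^*$: the $\eta$-derivative of the imaginary part carries a factor $\operatorname{dist}^2_{\Sigma_\tau}(x,x^*)$ or its $\eta$-derivative times $\operatorname{grad}$ thereof, all of which vanish at $x=x^*$; the $\eta$-derivative of the real part is the contraction of $\partial_\eta\Xi^*$ with $\operatorname{grad}_z\operatorname{dist}^2_{\Sigma_\tau}(x,z)|_{z=x^*}$, which vanishes at $x=x^*$, plus a term with $\partial_\eta x^*$ hitting the second argument, which carries a Hessian of $\operatorname{dist}^2$ at the diagonal contracted against $\Xi^*$ — and since $\Xi^*$ restricted to $\Sigma_\tau$ is $\|\eta\|$ times the initial covector of the geodesic $x^*$, this last term is $\operatorname{grad}$ of the squared distance along its own minimising geodesic, hence also zero. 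The final "uniqueness near $\Phi$" statement — that $\mathfrak{C}_\varphi$ does not come near $\Phi$ except along $\Phi$ itself — is the one genuinely non-formal point and the main obstacle: it follows from the Implicit Function Theorem applied to the map $(\tau,x;s,y,\eta)\mapsto\varphi_\eta$, whose Jacobian in $x$ is essentially $\varphi_{x\eta}$, nondegenerate on $\Phi$ by \eqref{Eqn: phi is non-degenerate}; so near each point of $\Phi$ the set $\{\varphi_\eta=0\}$ is a graph $x=x(\tau;s,y,\eta)$, and since it contains the flow it must coincide with it. One has to upgrade this pointwise statement to a single neighbourhood $W$, which follows from a compactness/homogeneity argument (restrict $\|\eta\|_{\Sigma_s}=1$, use that the $\Sigma$-slices are compact, and invoke homogeneity in $\eta$). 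I expect the bookkeeping in the second-order distance expansions of Step (b), and making the IFT argument in (c) uniform, to be where the real work lies; everything else is transplantation of \cite{CLV}.
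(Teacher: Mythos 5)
Part (a) and the beginning of part (b) are fine and coincide in substance with the paper, which uses the identities \eqref{Eq: properties of world function} for the Ruse--Synge world function where you use the corresponding distance-squared identities on the slices.

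The argument you give for the nondegeneracy \eqref{Eqn: phi is non-degenerate} is wrong, and this is the key step. You claim that $\varphi_{x^\alpha\eta_\beta}$ on the flow reduces to $-h_{\tau,\alpha\gamma}\,\partial(x^*)^\gamma/\partial\eta_\beta$ up to lower-order terms, so that nondegeneracy ``amounts to invertibility of the derivative of the flow in $\eta$''. Two problems. First, differentiating $\varphi_x(x^*(\eta);\eta)=\xi^*(\eta)$ in $\eta$ gives $\varphi_{x\eta}|_{x=x^*}=\partial_\eta\xi^*-\varphi_{xx}|_{x=x^*}\,\partial_\eta x^*$, so you have dropped the $\partial_\eta\xi^*$ term, which is not lower order (at $\tau=s$ it equals the identity). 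Second, and more fundamentally, $\partial_\eta x^*$ is \emph{not} generically invertible: it vanishes identically at $\tau=s$ and acquires a kernel at conjugate points --- that is exactly the caustic degeneracy that complex-valued phase functions are designed to circumvent. If your reduction were correct, the phase function would be degenerate at $\tau=s$ and at every caustic, which is false. The correct route, used by the paper, is the criterion of \cite[Cor.~2.4.5]{SaVa}: for a phase function satisfying \eqref{Eqn: phi vanishes on the flow}--\eqref{Eqn: d phi is the cotangent component on the flow}, the nondegeneracy of $\varphi_{x\eta}$ on the flow follows from strict positivity of $\operatorname{Im}\varphi_{x^\alpha x^\beta}$ on the flow; the paper computes $\operatorname{Im}\varphi_{x^\alpha x^\beta}|_{x=x^*}=\epsilon\|\eta\|_{\Sigma_s}h_{\alpha\beta}>0$ directly from \eqref{Eq: properties of world function}. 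The imaginary Hessian is the input, not an error term.

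In part (c) you correctly reduce $\Phi\subseteq\mathfrak{C}_\varphi$ (equivalently, by differentiating \eqref{Eqn: phi vanishes on the flow} in $\eta$) to the identity $\xi^*_\alpha\,\partial(x^*)^\alpha/\partial\eta_\beta=0$, but the justification you offer (``grad of the squared distance along its own minimising geodesic, hence zero'') does not establish this: the Hessian of $\operatorname{dist}^2$ at the diagonal is $2h$, not zero, so the vanishing is not automatic. What is really being used is a dynamical fact: the paper observes that the geodesic flow preserves the tautological one-form $\Theta=\Xi_a\,\mathrm{d}X^a$ along the null orbit (equivalently, the first-variation formula for the vanishing action integral along a null geodesic with fixed initial point), from which $\Xi^*_a\,\partial(X^*)^a/\partial\eta_b=0$ and hence the $\Sigma$-projection of the identity. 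Your implicit-function-theorem argument for the ``furthermore'' clause, upgraded by homogeneity and compactness of $\Sigma$, is a legitimate alternative to the paper's contradiction-by-sequences argument, but it relies on \eqref{Eqn: phi is non-degenerate}, which you have not actually established.
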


\begin{proof}
(a) The function $\varphi$ is positively homogeneous in $\eta$ of degree $1$ because $X^*$ and $\Xi^*$ are positively homogeneous of degree $0$ and $1$, respectively -- \textit{cf.} equation \eqref{homogeneity Levi-Civita flow}.

\

(b) The inequality $\operatorname{Im}\varphi\geq 0$ follows at once from \eqref{Eqn: Lorentzian phase function}. So let us prove \eqref{Eqn: properties of the phase function}.
Recall the following well-known identities for the Ruse--Synge world function $\sigma$ \cite[Sec. 4.1]{Poisson:2011nh}:
	\begin{subequations}\label{Eq: properties of world function}
		\begin{align}
		\label{Eq: properties of world function a}
			\sigma(X,X)&=0\,,\\
			\label{Eq: properties of world function b}
			\mathrm{d}\sigma(X,X)&=0\,,\\
			\label{Eq: properties of world function c}
			[\nabla_a\nabla'_b\sigma(X,X')]|_{X=X'}&=
			[\nabla_a\nabla_b\sigma(X,X')]|_{X=X'}=-g_{ab}(X)\,,\\
			\label{Eq: properties of world function d}
			[\nabla_a\nabla'_b\nabla'_c\sigma(X,X')]|_{X=X'}&=0\,.
		\end{align}
	\end{subequations}
	Here $\nabla'$ indicates that the Levi-Civita connection $\nabla$ on $(M,g)$ acts on the second argument of $\sigma$.
	
	Since $\mathrm{d}\sigma(X,X)=0$, \eqref{Eqn: phi vanishes on the flow} is satisfied.
	Moreover, \eqref{Eq: properties of world function b} and \eqref{Eq: properties of world function c} ensure that \eqref{Eqn: d phi is the cotangent component on the flow} holds. Indeed,
	\begin{align*}
		\varphi_{x^\alpha}=
		\nabla_\alpha\varphi=
		-g^{\beta\nu}\nabla_{\alpha}\nabla'_{\beta}\sigma\;\Xi^*_\nu+
		i\epsilon\|\eta\|_{\Sigma_s}\nabla_\alpha\sigma\stackrel{x=x^*(t;s,y,\eta)}{=}
		\xi^*_\alpha\,.
	\end{align*}
	Finally let us prove that \eqref{Eqn: phi is non-degenerate}.
	In view of \cite[Cor. 2.4.5]{SaVa}, it is enough to check that $ \operatorname{Im}(\varphi_{x^\alpha x^\beta}(\tau,x^*(\tau;s,y,\eta),s,y,\eta))>0$ as a bilinear form.
	A direct calculation shows that
	\begin{align*}
		\frac1\epsilon\operatorname{Im}\varphi_{x^\alpha x^\beta}=
		\|\eta\|_{\Sigma_s}\partial_{x^\beta}\nabla_{x^\alpha}\sigma=
		\|\eta\|_{\Sigma_s}\nabla_{x^\beta}\nabla_{x^\alpha}\sigma
		+\|\eta\|_{\Sigma_s}\Gamma_{\beta\alpha}^{\phantom{\beta\alpha}{a}}\nabla_{x^a}\sigma\,.
	\end{align*}
	The above equation, combined with \eqref{Eq: properties of world function a}-- \eqref{Eq: properties of world function d}, implies that 
\[
\operatorname{Im}(\varphi_{x^\alpha x^\beta}(\tau,x^*(\tau;s,y,\eta),s,y,\eta))=\epsilon\,\|\eta\|_{\Sigma_s}h_{\alpha\beta},
\] 
which is positive definite.
	
\

(c)
	Differentiating \eqref{Eqn: phi vanishes on the flow} with respect to $\eta_\beta$ we get
	\begin{align*}
		0&=\partial_{\eta_\beta}[\varphi(t,x^*(t;s,y,\eta);s,y,\eta)]
		\\&=\varphi_{\eta_\beta}(t,x^*(t;s,y,\eta);s,y,\eta)
		+\varphi_{x^\alpha}(t,x^*(t;s,y,\eta);s,y,\eta)(x^*)^\alpha_{\eta_\beta}(t;s,y,\eta)
		\\&=\varphi_{\eta_\beta}(t,x^*(t;s,y,\eta);s,y,\eta)
		+\xi^*_\alpha(t,x^*(t;s,y,\eta);s,y,\eta)(x^*)^\alpha_{\eta_\beta}(t;s,y,\eta)\,,
	\end{align*}
	where we used equation \eqref{Eqn: d phi is the cotangent component on the flow}.
	The task at hand is to show that
	\begin{align*}
		\xi^*_\alpha(t,x^*(t;s,y,\eta);s,y,\eta)(x^*)^\alpha_{\eta_\beta}(t;s,y,\eta)=0\,,
	\end{align*}
	from which $\Phi\subseteq\mathfrak{C}_\varphi$ descends.
	For that, we observe that $\Xi^*(t;s,y,\eta)=\|\eta\|_{\Sigma_s}\,\widehat{\Xi}^*(t,s,y,\eta)$, where $\widehat{\Xi}^*(t;s,y,\eta)$ is the cotangent vector to the geodesic $\gamma\colon t\mapsto X^*(t;s,y,\eta)$,
\[
\widehat{\Xi}^*_a=g_{ab}\frac{\mathrm{d}(X^*)^b}{\mathrm{d}t}.
\]
	Being the geodesic flow Hamiltonian, it follows that the canonical $1$-form $\Theta=\Xi_a\mathrm{d}X^a$ is conserved along the geodesic.
	Observing that $\Xi^*$ differs from $\widehat{\Xi}^*$ by a constant factor, we conclude that
	\begin{align*}
		(\eta_+)_a\,\mathrm{d}Y^a
		=\Xi^*_a\,\mathrm{d}(X^*)^a
		=\Xi^*_a\,\big[(X^*)^a_{Y^b}\mathrm{d}Y^b+(X^*)^a_{\eta_b}\mathrm{d}\eta_b\big].
	\end{align*}
	The latter equation implies $\Xi^*_a(X^*)^a_{Y^b}=(\eta_+)_b$ as well as $0=\Xi^*_a(X^*)^a_{\eta_b}=\xi^*_\alpha(x^*)^\alpha_{\eta_b}$, where we have used the fact that $X^*(t;s,y,\eta)=(t,x^*(t,s,y,\eta))$ -- \textit{cf.} Lemma \ref{Lem: reparametrization of Lorentzian flow}.
	
	Finally, let us assume by contradiction that for all neighbourhoods $W$ of $\Phi$ we have $\mathfrak{C}_\varphi\cap(W\setminus\Phi)\neq\emptyset$.
	Then, there exists a sequence $(t_n,x_n;s_n,y_n,\eta_n)\in\mathfrak{C}_\varphi$ converging to $(t,x^*;s,y,\eta)\in\Phi$. A standard Taylor expansion argument shows that this implies 
	\[
	\det\varphi_{x^\alpha\eta_\beta}(t,x^*;s,y,\eta)=0,
	\]
	which contradicts \eqref{Eqn: phi is non-degenerate}.
\end{proof}

\subsection{The Lorentzian parametrix}

The Lorentzian Levi-Civita phase function can be exploited to construct a suitable Lorentzian analogue of the operator \eqref{operator U without plus static}.

%

Let $\Sigma_s$ be a fixed Cauchy surface. Let 
\begin{equation}
\label{operator U(t,s) lorentzian}
U(t,s)\colon C^\infty_0(\Sigma_s)\to C^\infty_0(\Sigma_t)
\end{equation}
be the linear operator uniquely defined (modulo an infinitely smoothing operator) by the following properties.
\begin{enumerate}[(1)]
		\item[] \textbf{Property 1}.
		The Schwartz kernel $u(t,x,y;s)$ of $U(t,s)$ satisfies
		\begin{align}
		\label{property 1 U(t,s)}
			\Box^{(t,x)} u(t,x,y;s)=0\,,\qquad
			u(s,x,y;s)=\delta_{\Sigma_s}(x,y)\,, 
		\end{align}
		where $\delta_{\Sigma_s}(x,y)$ denotes the integral kernel of the identity operator on $C^\infty(\Sigma_s)$.
		\item[] \textbf{Property 2}.
		The Schwartz kernel $u(t,x,y;s)$, seen as a distribution in $\mathcal{D}^\prime(M\times M)$ satisfies
		\begin{align}
		\label{WF U}
			\operatorname{WF}(u)=\{(X,Y,k_X,-k_Y)\in T^*(M\times M)\setminus\{0\}\;|\; (X,k_X)\sim(Y,k_Y)\;\textrm{and}\;k_X\triangleright 0\}
		\end{align}
		where $\operatorname{WF}$ denotes the wavefront set, $\sim$ means that the point $X$ and $Y$ are connected by a lightlike geodesic $\gamma$ whose tangent vector at $X$ is $k_X$, and $k_Y$ is the parallel transport of $k_X$ from $X$ to $Y$ along $\gamma$. The symbol $\triangleright$ means that $k_X$ is future directed. 
	\end{enumerate}

The operator $U(t,s)$, effectively, maps initial data $C^\infty_0(\Sigma_s)$ on the `initial' Cauchy surface to spacelike compact wave solutions in $C^\infty(M)$. The technology developed in earlier sections, allows us to construct $U(t,s)$ explicitly and invariantly as a global oscillatory integral, up to an infinitely smoothing operator.

\begin{theorem}\label{Thm: wave propagator for globally hyperbolic spacetimes}
	Let $(M,g)\simeq(\mathbb{R}\times\Sigma,\beta dt^2-h_t)$ be a globally hyperbolic spacetime with compact Cauchy surface.
	Then the Schwartz kernel $u(t,x,y;s)$ of the operator $U(t,s)$ can be written, modulo an infinitely smooth function in all variables, as a single oscillatory integral
	\begin{equation}
		u(t,x,y;s)\overset{\mod C^\infty}{=}
		\dfrac{1}{(2\pi)^{d-1}}\int_{T^*_y\Sigma} e^{i \,\varphi(t,x;s,y,\eta)}\,\mathfrak{a}(t;s,y,\eta)\, \chi(t,x;s,y,\eta) \,w(t,x;s,y,\eta)\,d \eta,
	\end{equation}
	global in space $x$ and in time $t$, where
	\begin{enumerate}[(a)]
		\item $\varphi$ is the Lorentzian Levi-Civita phase function \eqref{Eqn: Lorentzian phase function};
		\item $\mathfrak{a}$ is a polyhomogeneous symbol of order zero depending smoothly on $t$,
		\[
		\mathfrak{a} \sim \sum_{j=0}^\infty \mathfrak{a}_{-j}, \qquad
		\mathfrak{a}_{-j}(t;s,y,\lambda\,\eta)=\lambda^{-j}\, \mathfrak{a}_{-j}(t;s,y,\eta)\quad \forall\lambda>0;
		\]
		\item $\chi$ is an infinitely smooth function on $M\times\mathbb{R}\times T'\Sigma$ satisfying
		\begin{enumerate}[(i)]		
			\item $\chi(t,x;s,y,\eta)=0$ on $\{(t,x;s,y,\eta) \,|\, \|\eta\|_{\Sigma_s}\leq 1/2\}$;
			\item $\chi(t,x;s,y,\eta)=1$ on the intersection between $\{(t,x;s,y,\eta) \,|\, \|\eta\|_{\Sigma_s} \geq 1\}$ and any conical neighbourhood of $\{(t,x^\ast(t;s,y,\eta),s,y,\eta)\}$ where $(t,x^*(t;s,y,\eta))=X^*(t;s,y,\eta)$ is the flow defined in equation \eqref{Eqn: Lorentzian flow} -- \textit{cf.} Lemma \ref{Lem: reparametrization of Lorentzian flow}.
			\item $\chi(t,x;s,y,\alpha\, \eta)=\chi(t,x;s,y,\eta)$ for $\alpha\geq 1$ on $\{ (t,x;s,y,\eta) \, | \, \|\eta\|_{\Sigma_s}\geq 1  \}$;
		\end{enumerate}
		
		\item the weight $w$ is defined as
		\begin{equation}
		w(t,x;s,y,\eta):=
		[\rho_{h_t}(x)]^{-1/2}\,[\rho_{h_s}(y)]^{-1/2}
		\left[
		{\det}^2\!
		\left(
		\varphi_{x^\alpha \eta_\beta}(t,x;s,y,\eta)
		\right)
		\right]^{1/4},
		\end{equation}
		where $\rho_h$ is the Riemannian density on the Cauchy surface (see \eqref{density}) and the branch of the complex root is chosen so that 
		\[
		\left.
		\arg \left[
		{\det}^2\!
		\left(
		\varphi_{x^\alpha \eta_\beta}(t,x;s,y,\eta)
		\right)
		\right]^{1/4} \right|_{t=s} =0\,.
		\]
	\end{enumerate}
\end{theorem}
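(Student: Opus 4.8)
The strategy is to reduce the global hyperbolic case to the ultrastatic algorithm already set up in Section~\ref{Subsec: the algoritm}, with the Lorentzian Levi-Civita phase function \eqref{Eqn: Lorentzian phase function} playing the role that the Riemannian Levi-Civita phase \eqref{Eq: Levi-Civita phase function ultrastatic} played there. First I would observe that, unlike in the static case, there is no time-independent operator $E$; instead one works directly with $\Box_g$ and with the time-dependent family of Cauchy surfaces $\Sigma_t$. The key point is that the singularities of solutions of $\Box_g u = 0$ propagate along null bicharacteristics, which project to the Levi-Civita flow $X^*(t;s,y,\eta)$ on $M$ reparameterised by the global time $t$ (Lemma~\ref{Lem: reparametrization of Lorentzian flow}). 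The Proposition just proved guarantees that $\varphi$ has all the structural properties of a class-$\mathcal{L}_\mathfrak{h}$ phase function: homogeneity of degree one, vanishing on the flow, correct first $x$-derivative on the flow, nondegeneracy of $\varphi_{x^\alpha\eta_\beta}$ on the flow, $\operatorname{Im}\varphi\ge0$, and the fact that the critical set $\mathfrak{C}_\varphi$ coincides with $\Phi$ near $\Phi$. These are exactly the ingredients used in \cite{CLV} to run the amplitude-to-symbol machinery.

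With the phase fixed, I would then carry out verbatim the three-step algorithm of Subsection~\ref{the algorithm}: apply $\Box_g^{(t,x)}$ to the ansatz with $\chi\equiv1$ to produce a new amplitude $a$ as in \eqref{Eq:a-amplitude} with $\partial_t^2-E$ replaced by $\Box_g^{(t,x)}$; perform the reduction of the amplitude using the operators $L_\alpha$ and $\mathfrak{S}_{-k}$ — here one must check that the expressions \eqref{operator L}--\eqref{operators mathfrak S}, built solely out of $\varphi$, $w$ and $x^*$, still make sense, which they do since $\varphi_{x\eta}$ is invertible near the flow by \eqref{Eqn: phi is non-degenerate}; and finally impose $I_\varphi(\mathfrak b)=0\bmod C^\infty$, which gives a hierarchy of transport ODEs in $t$ for the homogeneous components $\mathfrak{a}_{-k}(t;s,y,\eta)$. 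The essential difference from the static case is that the operator acting on the oscillatory integral is the full $\Box_g$, not $\partial_t^2-E$, so the transport equations have $t$-dependent (and, through $h_t$, $\beta$, genuinely Lorentzian) coefficients; but this does not affect solvability, since they are still linear first-order ODEs along the flow with smooth coefficients, solved by quadratures. The initial conditions $\mathfrak{a}_{-k}(s;s,y,\eta)$ are fixed by requiring $u(s,x,y;s)=\delta_{\Sigma_s}(x,y)\bmod C^\infty$, using the representation of the identity operator on $\Sigma_s$ as in \eqref{identity static} with $\phi(x;y,\eta):=\varphi(s,x;s,y,\eta)$ — here one uses that at $t=s$ the surface $\Sigma_s$ carries the Riemannian metric $h_s$ and $\|\eta\|_{\Sigma_s}=\mathfrak h(y,\eta)$, so the construction of \cite[Section~6]{CLV} applies directly. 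The branch choice for $[\det^2\varphi_{x\eta}]^{1/4}$ at $t=s$ makes $w|_{t=s}$ match the weight used in \eqref{identity static}.

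It remains to verify the two defining properties of $U(t,s)$. Property~1 is immediate from the construction: the transport hierarchy is precisely the condition $\Box_g^{(t,x)}u=0\bmod C^\infty$, and the initial-data matching gives $u(s,x,y;s)=\delta_{\Sigma_s}(x,y)\bmod C^\infty$. Property~2 — the Hadamard form of the wavefront set \eqref{WF U} — is where the real work lies, and I expect it to be the main obstacle. The plan is a Lagrangian/FIO argument: the oscillatory integral is a Fourier integral operator whose canonical relation is parameterised, via $\varphi\in\mathcal{L}_\mathfrak h$, by $\mathfrak{C}_\varphi$, and part~(c) of the Proposition identifies $\mathfrak{C}_\varphi$ with $\Phi$ near $\Phi$; the Lagrangian it generates is exactly the twisted graph of the null bicharacteristic flow restricted to future-pointing null covectors over $\Sigma_s$. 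One then shows $\operatorname{WF}(u)$ is contained in the RHS of \eqref{WF U} by the standard non-stationary-phase estimate away from the flow (the cut-off $\chi$ and property~(c) ensure the phase has no critical points off $\Phi$, so contributions there are smooth), together with condition~3 in the proof of Theorem~\ref{main theorem static propagator}, i.e.\ the decay of the inverse Fourier transform as $\lambda\to-\infty$, which encodes that only the future-directed half of the light cone contributes — this must be checked for the Lorentzian $\varphi$ using $\operatorname{Im}\varphi\ge0$ and the sign of $\|\eta\|_{\Sigma_s}$. The reverse inclusion (that the whole Hadamard set is attained) follows from the ellipticity of the leading symbol $\mathfrak{a}_0$ along the flow, which is guaranteed by the transport equation and the nonvanishing initial datum $\mathfrak{a}_0(s;s,y,\eta)\neq0$. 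Assembling these microlocal facts, invoking the propagation-of-singularities results for $\Box_g$, gives \eqref{WF U} and completes the identification of $u$ with the kernel of $U(t,s)$ modulo $C^\infty$.
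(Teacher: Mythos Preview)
Your proposal is correct and follows essentially the same approach as the paper: the paper's proof consists of the single sentence ``a straightforward adaptation of results from \cite{CLV}, combined with the properties of the Lorentzian Levi-Civita phase function,'' and what you have written is precisely a detailed unpacking of that adaptation --- fixing the Lorentzian Levi-Civita phase, rerunning the three-step algorithm of Subsection~\ref{the algorithm} with $\Box_g$ in place of $\partial_t^2-E$, and reading off Properties~1 and~2 from the construction. If anything, your sketch is more thorough than the paper's own argument.
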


\begin{proof}
The proof is a straightforward adaptation of  results from \cite{CLV}, combined with the properties of the Lorentzian Levi-Civita phase function.
\end{proof}

Let us stress that, thanks to the adoption of the Levi-Civita phase function, (a straightforward adaptation of) the algorithm presented in Subsection~\ref{the algorithm} uniquely determines the scalar, global symbol $\mathfrak{a}$ of the operator $U(t,s)$. Furthermore, all homogeneous components $\mathfrak{a}_{-j}$, $j\ge 0$, of $\mathfrak{a}$ are also guaranteed to be scalar.


\section{An application: Hadamard states}
\label{Sec: An application: Hadamard states}

In Section~\ref{Sec: Extension to general globally hyperbolic spacetimes} we have seen how the results of \cite{CLV} can be suitably generalised to an arbitrary globally hyperbolic spacetime $(M,g)$ with compact Cauchy surfaces and of arbitrary dimension $d=\dim M\geq 3$. In this section we shall discuss an application of our results by showing that starting from the wave propagator constructed in Theorem~\ref{Thm: wave propagator for globally hyperbolic spacetimes} one can obtain a representation of the integral kernel of a Hadamard state up to smooth errors. 

Hadamard states play a distinguished role in the algebraic formulation of quantum field theory, see {\it e.g.} \cite{Book_AQFT}, and \cite{Khavkine:2014mta} for a recent review. Given a globally hyperbolic spacetime $(M,g)$ and a normally hyperbolic second order linear partial differential operator $P:C^\infty(M)\to C^\infty(M)$, a Hadamard two-point function is fully specified by a bi-distribution $\omega_2\in D^\prime(M\times M)$ such that its integral kernel $\omega_2(X,X^\prime)$ satisfies
\begin{equation}\label{Eqn:2-pt-conditions}
\left\{
\begin{array}{ll}
(P\otimes\mathbb{I})\,\omega_2(X,X^\prime)=(\mathbb{I}\otimes P)\,\omega_2(X,X^\prime)&\in C^\infty(M\times M),\\ \omega_2(X,X^\prime)-\omega_2(X^\prime,X)-i \,G(X,X^\prime)&\in C^\infty(M\times M),
\end{array}\right.
\end{equation}
and
\begin{equation}\label{Eqn:WF}
	\operatorname{WF}(\omega_2)=\{(X,Y,k_X,-k_Y)\in T^*(M\times M)\setminus\{0\}\;|\; (X,k_X)\sim(Y,k_y)\;\textrm{and}\;k_X\triangleright 0\}
\end{equation}
where $G=G^+-G^-$ is the retarded-minus-advanced fundamental solution introduced in Lemma \ref{Lem: solution space isomorphism}. The notation in \eqref{Eqn:WF} was introduced after \eqref{WF U}. In a quantum field theory Hadamard bi-distributions, subject to a further positivity requirement, play a distinguished role in that they identify a natural class of physically meaningful Gaussian quantum states.

The characterisation of Hadamard bidistributions via \eqref{Eqn:2-pt-conditions} and \eqref{Eqn:WF} admits the following alternative and equivalent description proven by Radzikowski in \cite{Radzikowski:1996ei,Radzikowski:1996pa}. Consider $\omega_2\in\mathcal{D}^\prime(M\times M)$ satisfying equation \eqref{Eqn:2-pt-conditions} and let $\omega_2(X,X^\prime)$ indicate the associated integral kernel. The bi-distribution satisfies \eqref{Eqn:WF} if, for every geodesically convex open subset $\mathcal{O}\subseteq M$ and for every $X,X^\prime\in\mathcal{O}$, 
\begin{equation}\label{Eqn: 2-pt-integral-kernel}
\omega_2(X,X^\prime)=\lim_{\epsilon\to 0^+}\left(H_\epsilon(X,X^\prime)+W(X,X^\prime)\right),
\end{equation}
where the limit is understood in the weak sense. In the above expression $W$ is an infinitely smooth function in  $C^\infty(\mathcal{O}\times\mathcal{O})$ and 
\begin{gather}\label{Eq: Hadamard parametrix}
H_\epsilon(X,X^\prime):=\beta_n\dfrac{U_d(X,X^\prime)}{\sigma^{\frac{d-2}{2}}_\epsilon(X,X^\prime)} +\gamma_d V_d(X,X^\prime)\log\left(\frac{\sigma_\epsilon(X,X^\prime)}{\ell^2}\right)\,,\qquad
\end{gather}
where 
\begin{itemize}
	\item $\ell\in\mathbb{R}$ is a reference length to make the logarithm dimensionless.
	\item $\sigma_\epsilon(X,X^\prime)=\sigma(X,X^\prime)+ i\epsilon(t(X)-t(X^\prime))+\epsilon^2$. Here $\sigma$ is the geodesic distance between $X$ and $X^\prime$ while $t:\mathbb{R}\times\Sigma\to\mathbb{R}$ is the global temporal function introduced in Theorem \ref{Th:glob_hyp}.
	\item $\beta_d$ and $\gamma_d$ are normalisation constants such that
	\begin{eqnarray}
	\beta_d= -\frac{\Gamma\left(\frac{d}{2}\right)}{2\pi^{\frac{d}{2}}},& \gamma_d=(-1)^{\frac{d}{2}}\frac{2^{1-d}}{\pi^{\frac{d}{2}}\Gamma(\frac{d}{2})},&\textrm{d even}\\
	\beta_d= (-1)^{\frac{d+1}{2}}\frac{\pi^{\frac{2-d}{2}}}{2\Gamma\left(\frac{4-d}{2}\right)},& \gamma_d=0,&\textrm{d odd}.		
	\end{eqnarray}
	\item $U_d(X,X^\prime), V_d(X,X^\prime)\in C^\infty(\mathcal{O}\times\mathcal{O})$ admit an asymptotic expansion as power series of the geodesic distance 
	\begin{equation}\label{Eq: Hadamard coefficients}
	U_d=\sum\limits_{k=0}^\infty U_{d,k}\sigma^k,\quad V_d=\sum\limits_{k=0}^\infty V_{d,k}\sigma^k.
	\end{equation}
	All coefficients $U_{d,k}$ and $V_{d,k}$ are smooth and they are determined from the equation of motion \eqref{Eqn:2-pt-conditions} via a hierarchy of transport equations with prescribed initial conditions, see \cite{Radzikowski:1996ei,Radzikowski:1996pa}. 
\end{itemize}

With a slight abuse of notation one refers to $H\equiv H_\epsilon$ as the Hadamard parametrix. From a rigorous standpoint, $H$ identifies a distribution $\widehat{H}\in\mathcal{D}^\prime(\mathcal{O}\times\mathcal{O})$ via
$$\widehat{H}(f,f^\prime):=\lim_{\epsilon\to 0^+} H_{\epsilon}(f,f^\prime),\quad\forall f,f^\prime\in C^\infty_0(\mathcal{O}).$$
The above discussion can be summarised in the following theorem (see \cite{Radzikowski:1996ei,Radzikowski:1996pa} for the proof).
\begin{theorem}\label{Thm: Radzikowski theorem}
	Let $(M,g)$ be an $d$-dimensional globally hyperbolic spacetime and let $P$ be a normally hyperbolic second order linear partial differential operator. For any $\omega_2\in\mathcal{D}^\prime(M\times M)$ satisfying \eqref{Eqn:2-pt-conditions}, the following two conditions are equivalent:
	\begin{enumerate}
		\item the singular structure of $\omega_2$ is codified by \eqref{Eqn:WF},
		\item in every geodesically convex open neighbourhood $\mathcal{O}\subseteq M$ the integral kernel of $\omega_2$ is of the form \eqref{Eqn: 2-pt-integral-kernel}.
	\end{enumerate}
	If one of these two conditions is met, $\omega_2$ is said to be of \emph{Hadamard form}.
\end{theorem}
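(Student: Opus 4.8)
The statement is the classical theorem of Radzikowski \cite{Radzikowski:1996ei,Radzikowski:1996pa}; the plan is to follow its microlocal strategy, which rests on two ingredients: the computation of the wavefront set of the Hadamard parametrix, and H\"ormander's propagation of singularities theorem \cite{Hor}. Throughout, write $\mathcal{C}$ for the set on the right-hand side of \eqref{Eqn:WF}, and $\mathcal{C}'$ for its image under the involution $(X,Y,k_X,-k_Y)\mapsto(Y,X,k_Y,-k_X)$, i.e.\ $\mathcal{C}$ with $k_X$ replaced by a \emph{past}-directed null covector.

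First I would fix a geodesically convex open $\mathcal{O}\subseteq M$ and recall the construction of the Hadamard parametrix on $\mathcal{O}\times\mathcal{O}$: the coefficients $U_{d,k}$, $V_{d,k}$ in \eqref{Eq: Hadamard coefficients} are produced by the Hadamard recursion --- a hierarchy of transport ODEs along the geodesics issuing from the diagonal, with the standard initial conditions --- which, after truncation at a sufficiently high order and Borel resummation, make $\widehat H=\lim_{\epsilon\to0^+}H_\epsilon$ a well-defined element of $\mathcal{D}'(\mathcal{O}\times\mathcal{O})$ satisfying $P^{(X)}\widehat H,\,P^{(X')}\widehat H\in C^\infty$ and, thanks to the $i\epsilon$-prescription hidden in $\sigma_\epsilon$, $\widehat H(X,X')-\widehat H(X',X)-\ir\,G(X,X')\in C^\infty$. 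The crux is then the computation of $\operatorname{WF}(\widehat H)$: writing $\widehat H$ as the $\epsilon\to 0^+$ boundary value of functions holomorphic in $\sigma_\epsilon$ and invoking the theorem on wavefront sets of such boundary values \cite[Vol.~I]{Hor}, together with the near-diagonal behaviour of $\sigma$ and the identities of \cite[Sec.~4.1]{Poisson:2011nh}, one shows that in a neighbourhood of the diagonal $\operatorname{WF}(\widehat H)$ coincides with the near-diagonal part of $\mathcal{C}$; since $P^{(X)}\widehat H,\,P^{(X')}\widehat H\in C^\infty$, propagation of singularities then forces $\operatorname{WF}(\widehat H)$ to be a union of bicharacteristic strips --- lifted null geodesics --- whence $\operatorname{WF}(\widehat H)=\mathcal{C}\cap\bigl(T^*(\mathcal{O}\times\mathcal{O})\setminus\{0\}\bigr)$. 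I expect this wavefront-set computation for the parametrix to be the main obstacle, as it is the only step in which the explicit form \eqref{Eq: Hadamard parametrix} and the distributional $\epsilon$-limit genuinely enter.

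Granting this lemma, the implication $2\Rightarrow 1$ holds near the diagonal at once --- adding a smooth $W$ does not change the wavefront set --- and extends to all of $M\times M$ by applying propagation of singularities to $\omega_2$ itself, using $P^{(X)}\omega_2,\,P^{(X')}\omega_2\in C^\infty$ from \eqref{Eqn:2-pt-conditions} and the fact that every element of $\mathcal{C}$ lies on the same bicharacteristic strip as an element supported arbitrarily close to the diagonal. For $1\Rightarrow 2$, fix a geodesically convex $\mathcal{O}$ and set $w:=\omega_2-\widehat H$ on $\mathcal{O}\times\mathcal{O}$; then $P^{(X)}w,\,P^{(X')}w\in C^\infty$ and, by hypothesis $1$ together with the lemma, $\operatorname{WF}(w)\subseteq\operatorname{WF}(\omega_2)\cup\operatorname{WF}(\widehat H)\subseteq\mathcal{C}$. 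Subtracting the commutator identity of \eqref{Eqn:2-pt-conditions} from its parametrix counterpart yields $w(X,X')-w(X',X)\in C^\infty$, so $\operatorname{WF}(w)$ is invariant under the involution above and hence $\operatorname{WF}(w)\subseteq\mathcal{C}\cap\mathcal{C}'$. Since a nonzero null covector cannot be simultaneously future- and past-directed, $\mathcal{C}\cap\mathcal{C}'=\emptyset$, so $\operatorname{WF}(w)=\emptyset$, i.e.\ $w\in C^\infty(\mathcal{O}\times\mathcal{O})$; taking $W=w$ and recalling $\widehat H=\lim_{\epsilon\to0^+}H_\epsilon$ gives \eqref{Eqn: 2-pt-integral-kernel}. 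As $\mathcal{O}$ was an arbitrary geodesically convex neighbourhood, this is precisely condition $2$, which closes the equivalence.
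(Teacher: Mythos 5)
The paper does not prove this theorem; it simply attributes it to Radzikowski and points the reader to \cite{Radzikowski:1996ei,Radzikowski:1996pa}. Your proposal is a reconstruction of Radzikowski's original microlocal argument, so in that sense it is consistent with the paper's choice to treat this as an external result, and the overall strategy you identify -- wavefront-set computation for the Hadamard parametrix $\widehat H$ via the boundary-value-of-holomorphic-function theorem, propagation of singularities, and the antisymmetric-part trick -- is the correct one.

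Your treatment of $1\Rightarrow2$ is sound: with $\operatorname{WF}(\omega_2)\subseteq\mathcal{C}$ and $\operatorname{WF}(\widehat H)\subseteq\mathcal{C}$ on $\mathcal{O}\times\mathcal{O}$, the difference $w=\omega_2-\widehat H$ has $\operatorname{WF}(w)\subseteq\mathcal{C}$; since both $\omega_2$ and $\widehat H$ reproduce $\ir G$ modulo $C^\infty$ under antisymmetrisation, $w-\iota^*w\in C^\infty$ (with $\iota$ the flip), hence $\operatorname{WF}(w)$ is flip-invariant and therefore contained in $\mathcal{C}\cap\mathcal{C}'=\emptyset$. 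This closes $1\Rightarrow2$ correctly.

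The $2\Rightarrow1$ direction, however, is more delicate than your sketch suggests, and the step you glide over is precisely the known gap in Radzikowski's original argument. Propagation of singularities applied to $(P\otimes\mathbb{I})\omega_2,(\mathbb{I}\otimes P)\omega_2\in C^\infty$, together with the near-diagonal data, yields the \emph{lower} bound $\mathcal{C}\subseteq\operatorname{WF}(\omega_2)$: every point of $\mathcal{C}$ sits on a bicharacteristic strip emanating from the near-diagonal region where the Hadamard form pins down the singularities. It does \emph{not} by itself yield the upper bound $\operatorname{WF}(\omega_2)\subseteq\mathcal{C}$: nothing in the argument excludes singularities at pairs $(X,X',k_X,k_{X'})$ with $k_X,k_{X'}$ null but $X,X'$ \emph{not} joined by a null geodesic, because such a point cannot be connected by the bicharacteristic flow of $P\otimes\mathbb{I}$ or $\mathbb{I}\otimes P$ to the diagonal, so the near-diagonal information is silent there. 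Controlling these potential off-diagonal singularities is exactly where the local-to-global passage becomes nontrivial; Radzikowski's original treatment required an additional hypothesis (positivity, i.e.\ that $\omega_2$ is a genuine two-point function of a state) and a supplementary argument that appeared in the follow-up work cited in the paper. Your sketch, as written, proves only $\mathcal{C}\subseteq\operatorname{WF}(\omega_2)$ in this direction, not the asserted equality, and you should flag explicitly what extra input (positivity, or an equivalent mechanism) is needed to exclude the off-diagonal component.
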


It is worth observing that, given a bi-distribution $\omega_2\in\mathcal{D}^\prime(M\times M)$, a direct use of \eqref{Eqn: 2-pt-integral-kernel} to verify the Hadamard condition is often impractical, since it involves the explicit control of the integral kernel of $\omega_2$ in every geodesic neighbourhood. This is why \eqref{Eqn: 2-pt-integral-kernel} plays a key role in many applications of Hadamard states, {\it cf.} \cite{Khavkine:2014mta}, \cite{Drago-Gerard-17}. 

In the following we shall prove that there exists an explicit connection between our representation of the wave propagator, Hadamard states and the associated Hadamard parametrix, \eqref{Eq: Hadamard parametrix}. In this analysis we shall work under a few additional assumptions, namely 
\begin{itemize}
	\item $(M,g)$ is a $d$-dimensional ultrastatic globally hyperbolic spacetime with compact Cauchy surface,
	\item the normally hyperbolic second order linear partial differential operator $P$ ruling the dynamics is of the form $\partial^2_t-\Delta$ where $\Delta$ is the Laplace--Beltrami operator acting on scalars associated with the metric $h$, {\it cf.} equation \eqref{metric general form BS}.  
\end{itemize}

This choice is made only for computational and presentation simplicity. \emph{Mutatis mutandis}, our next results hold true replacing $-\Delta$ with $E=-\Delta + V$, where $V$ is a time-independent smooth potential.

Under the above assumptions, we have the following.

\begin{theorem}\label{Thm: construction of Hadamard parametrix}
Consider the operator
	\begin{align}\label{Eq: Hadamard FIO}
	\Omega(\tau,s):=\frac12(\sqrt{-\Delta})^{-1/2}\,e^{-i\sqrt{-\Delta}(\tau-s)}\,,\quad s,\tau\in\mathbb{R},
	\end{align}	
	where $(\sqrt{-\Delta})^{-1/2}$ is the square root of the pseudoinverse of $-\Delta$, cf.\ \cite[Chapter 2, Section 2]{Rellich}.
Then the integral kernel $\omega_2$ of \eqref{Eq: Hadamard FIO} is of Hadamard form as a distribution in $\mathcal{D}'(M\times M)$.
\end{theorem}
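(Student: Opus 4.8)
The plan is to invoke Radzikowski's Theorem~\ref{Thm: Radzikowski theorem}, which reduces the statement to checking that $\omega_2$ satisfies the bi-equation-of-motion conditions \eqref{Eqn:2-pt-conditions} and has wavefront set of the Hadamard form \eqref{Eqn:WF}; these two properties together are, by that theorem, exactly what it means for $\omega_2$ to be of Hadamard form. The structural observation I would build on is that $\Omega(\tau,s)=\tfrac12(\sqrt{-\Delta})^{-1/2}\circ e^{-\ir\sqrt{-\Delta}(\tau-s)}$ is the composition of the half-wave propagator $e^{-\ir\sqrt{-\Delta}(\tau-s)}$ with the classical pseudodifferential operator $\tfrac12(\sqrt{-\Delta})^{-1/2}$, of order $\le 0$, elliptic on $T'\Sigma$, and annihilating the one-dimensional $\ker\Delta$ (here $\Sigma$ is compact and $V=0$), while the kernel of $e^{-\ir\sqrt{-\Delta}(\tau-s)}$, viewed as a function of $(\tau,x;y)$ through $t:=\tau-s$, is represented, modulo $C^\infty$, by the oscillatory integral \eqref{oscillatory integral static theorem} of Theorem~\ref{main theorem static propagator} with $E=\Delta$, whose phase $\varphi$ is of class $\mathcal{L}_\mathfrak{h}$ with $\mathfrak{h}(y,\eta)=\|\eta\|_h$. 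Since $(\sqrt{-\Delta})^{-1/2}$ kills the zero mode, $\ker\Delta$ contributes only finite-rank smooth errors, irrelevant to both \eqref{Eqn:2-pt-conditions} and \eqref{Eqn:WF}.

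First I would dispatch the equations of motion. Applying $\partial_\tau^2-\Delta_x$ to $\Omega(\tau,s)$ and using functional calculus gives $0$ identically, because $\partial_\tau^2\,e^{-\ir\sqrt{-\Delta}(\tau-s)}=\Delta\,e^{-\ir\sqrt{-\Delta}(\tau-s)}$; choosing the eigenfunctions $v_k$ real makes $\omega_2(\tau,x;s,y)=\tfrac12\sum_{\zeta_k>0}\zeta_k^{-1/2}\,e^{-\ir\sqrt{\zeta_k}(\tau-s)}v_k(x)v_k(y)$ symmetric in $(x,y)$, so $\partial_s^2-\Delta_y$ annihilates it as well, and the first line of \eqref{Eqn:2-pt-conditions} holds exactly. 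For the second line, the same mode expansion identifies $\omega_2(X,X')-\omega_2(X',X)$ with $\ir\,G(X,X')$ modulo the smooth, finite-rank contribution of $\Pi^0$, using the mode expansion of the causal propagator $G=G^+-G^-$ — namely $\sin(\sqrt{-\Delta}(\tau-s))(\sqrt{-\Delta})^{-1}$ on $\Pi^+L^2(\Sigma)$ together with $(\tau-s)\,\Pi^0$ — recalled in Lemma~\ref{Lem: solution space isomorphism} and the Corollary following Theorem~\ref{main theorem static propagator}; matching the signs, which pins down the sign of the exponent in \eqref{Eq: Hadamard FIO}, is routine bookkeeping. Thus \eqref{Eqn:2-pt-conditions} holds modulo $C^\infty$; the same argument goes through verbatim with $\Delta$ replaced by the operator $E=\Delta-V$ of \eqref{operator E}.

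The substantial step is the wavefront set. Since $(\sqrt{-\Delta})^{-1/2}$ is elliptic on $T'\Sigma$ and of non-positive order, composing with it changes neither the wavefront set nor the principal-symbol support, so $\operatorname{WF}(\omega_2)$ equals the wavefront set of the kernel of $e^{-\ir\sqrt{-\Delta}(\tau-s)}$ in $\mathcal{D}'(M\times M)$. From \eqref{oscillatory integral static theorem}, $\operatorname{Im}\varphi\ge 0$ together with nondegeneracy and the positive-definiteness of the transverse imaginary Hessian (the $\mathcal{L}_\mathfrak{h}$ properties, \textit{cf.}~\cite{CLV}) forces microlocal concentration on the flow $\{x=x^*(\tau-s;y,\eta)\}$, where $\varphi=0$, $\varphi_\eta=0$, $\varphi_{x^\alpha}=\xi^*_\alpha(\tau-s;y,\eta)$, $\varphi_{y^\alpha}=-\eta_\alpha$, and, from $(-\ir\partial_\tau+\sqrt{-\Delta})e^{-\ir\sqrt{-\Delta}(\tau-s)}=0$, $\varphi_\tau=-\mathfrak{h}(y,\eta)=-\|\eta\|_h$, whence $\varphi_s=\|\eta\|_h$. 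Reading off the covectors, $\operatorname{WF}(\omega_2)$ consists of the points $\bigl((\tau,x^*(\tau-s;y,\eta)),\,(s,y);\,(-\|\eta\|_h,\xi^*(\tau-s;y,\eta)),\,(\|\eta\|_h,-\eta)\bigr)$, $(y,\eta)\in T'\Sigma$. Here $k_X:=(-\|\eta\|_h,\xi^*)$ is a null covector, since $\|\xi^*(\tau-s;y,\eta)\|_h=\mathfrak{h}(y,\eta)=\|\eta\|_h$ is conserved along the flow; $X=(\tau,x^*(\tau-s;y,\eta))$ and $Y=(s,y)$ lie on the null geodesic of $(M,g)$ obtained by lifting to $M$ the Hamiltonian flow of $\mathfrak{h}$ on $T^*\Sigma$ (\textit{cf.}~Subsection~\ref{Subsec: the algoritm}), with tangent at $X$ proportional to $(k_X)^\sharp$; and $(\|\eta\|_h,-\eta)=-k_Y$ with $k_Y$ the parallel transport of $k_X$ from $X$ back to $Y$ along that geodesic. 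Exactly one sheet of the light cone occurs, the one fixed by the sign of the exponential — the same that appears in Property~2 of the operator $U(t,s)$, \textit{cf.}~\eqref{WF U}, to which $e^{-\ir\sqrt{-\Delta}(\tau-s)}$ reduces in the ultrastatic case — which is the future-directed condition $k_X\triangleright 0$. The reverse inclusion holds because $e^{-\ir\sqrt{-\Delta}(\tau-s)}$ is a Fourier integral operator with nowhere-vanishing principal symbol (indeed unitary), so its wavefront set fills the whole flowout, and neither the elliptic factor nor the smooth $\Pi^0$-correction shrinks it. Hence $\operatorname{WF}(\omega_2)$ is exactly \eqref{Eqn:WF}, and Theorem~\ref{Thm: Radzikowski theorem} yields the claim.

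The hard part will be this wavefront computation: establishing both inclusions, carefully tracking the Fourier-transform and time-orientation conventions so that the single sheet produced coincides with the future-directed half required in \eqref{Eqn:WF}, and translating the Hamiltonian flowout of $-\ir\partial_\tau+\sqrt{-\Delta}$ into the geometric statement that $X$ and $Y$ are joined by a null geodesic carrying a parallel-transported covector. The rest — the equations of motion, the irrelevance of the one-dimensional $\ker\Delta$, and the fact that composition with the elliptic pseudodifferential factor of non-positive order preserves the wavefront set — is routine.
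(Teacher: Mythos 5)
Your argument is correct and follows essentially the same route as the paper's: verify the two hypotheses of Radzikowski's Theorem~\ref{Thm: Radzikowski theorem} (the bi-equations of motion \eqref{Eqn:2-pt-conditions} and the wavefront-set condition \eqref{Eqn:WF}) and conclude. The only differences are expository --- you unpack the identification of $\Lambda:=\omega_2(X,X')-\omega_2(X',X)$ with $\pm i\,G$ modulo $C^\infty$ via the explicit mode expansion, whereas the paper appeals to uniqueness of the Cauchy problem characterising $G$, and you spell out the flowout/wavefront-set computation where the paper simply cites \cite{CLV}.
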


\begin{proof}
	To start with, observe that $\sqrt{-\Delta}$ is an elliptic pseudodifferential operator of order $-1$. Hence  $\Omega(\tau,s)$ is a Fourier integral operator whose amplitude can be determined (non-invariantly) by \cite[Thm. 18.2]{shubin}.
	
	It is easy to see that $\omega_2$ satisfies 
	$$(P\otimes\mathbb{I})\omega_2=(\mathbb{I}\otimes P)\omega_2\in C^\infty(M\times M).$$
Put 
\[
\Lambda(f_1,f_2):=\omega_2(f_1,f_2)-\omega_2(f_1,f_2), \qquad \forall f_1,f_2\in \mathcal{D}(M).
\]
Of course, $\Lambda$ defines a distribution in $\mathcal{D}^\prime(M\times M)$. Denoting a generic point of $M\simeq\mathbb{R}\times\Sigma$ by $X=(t,x)$ as above, the integral kernel $\Lambda(X,X^\prime)$ satisfies
	\begin{gather}\label{Eq: Cauchy problem for causal propagator}
	(P\otimes\mathbb{I})\Lambda(X,X^\prime)=(\mathbb{I}\otimes P)\Lambda(X,X^\prime)\in C^\infty(M\times M)\\
	\Lambda(X,X^\prime)|_{t=t^\prime}\in C^\infty(\Sigma\times\Sigma)\,,\qquad
	\partial_t\Lambda(X,X^\prime)|_{t=t^\prime}-\delta_\Sigma(x,x^\prime)\in C^\infty(\Sigma\times\Sigma),
	\end{gather}
	where $\delta_\Sigma(x,x^\prime)$ is the integral kernel of the identity operator acting on scalar functions on the Cauchy surface $\{t\}\times\Sigma$. Hence $-i\Lambda$ must coincide up to smooth terms with the retarded-minus-advanced fundamental solution $G$, {\it cf.} Lemma \ref{Lem: solution space isomorphism}.
	
	Finally we can determine the wavefront set of $\omega_2$ since, being $(\sqrt{-\Delta})^{-1/2}$ an elliptic operator, $\operatorname{WF}(\omega_2)=\operatorname{WF}(u)$ where $u$ is the integral kernel of $U(\tau-s)$.
Now, on account of \cite{CLV}, we know that $\operatorname{WF}(u)$ is of the form \eqref{Eqn:WF}. Thus, by Theorem \ref{Thm: Radzikowski theorem} we can draw the sought-after conclusion.
\end{proof}

\begin{remark}
Theorem~\ref{Thm: construction of Hadamard parametrix} can in principle be recovered indirectly from results available in the literature --- see, for example, \cite[Sections~5.1 and~9.1]{Strohmaier:2018bcw} and \cite[Section~6.2]{baer}.
\end{remark}

Theorem \ref{Thm: construction of Hadamard parametrix} ensures that the Fourier Integral Operator $\Omega$ in equation \eqref{Eq: Hadamard FIO} identifies a distribution $\omega_2$ of Hadamard form. Hence, on account of Theorem \ref{Thm: Radzikowski theorem}, we can conclude that in every geodesically convex open neighbourhood $\mathcal{O}\subseteq M$ the integral kernel of $\omega_2$ differs from the Hadamard one, {\it cf.} \eqref{Eq: Hadamard parametrix}, by a smooth remainder.

In view of the results of Section \ref{Sec: The wave propagator in static spacetimes}, we can write
\begin{gather}
\Omega(t,0)
\overset{\mod \Psi^{-\infty}}{=}\int_\Sigma \int_{T^*_y\Sigma} \frac{e^{i \varphi(t,x;y,\eta)}}{(2\pi)^{n-1}}\,\mathfrak{f}(t;y,\eta)\,\chi(t,x;y,\eta) w(t,x;y,\eta) \,\left(\,\cdot\,\right)\,\rho_h(y)\,dy \,d\eta.\label{half wave divided by sqrt}
\end{gather}
An explicit formula for the scalar symbol $\mathfrak{f}$ can be obtained following the same procedure outlined in Section \ref{Sec: The wave propagator in static spacetimes}, albeit with a different initial condition.
In what follows we show that there exists a concrete relation between the coefficients in the homogeneous expansion of the amplitude $\mathfrak{f}$ from \eqref{half wave divided by sqrt} and those appearing in \eqref{Eq: Hadamard coefficients} for the expansion of the Hadamard parametrix \eqref{Eq: Hadamard parametrix}.

Consider the integral kernel
\begin{equation}
\label{integral kernel calculation hadamard}
\mathcal{I}_{\mathfrak{f}}(t,x;y):=\frac{1}{(2\pi)^{n-1}} \int_{T^*_y\Sigma} e^{i \varphi(t,x;y,\eta)}\,\mathfrak{f}(t;y,\eta)\,\chi(t,x;y,\eta)\,w(t,x;y,\eta)\,d\eta,
\end{equation}
with $X= (t,x)\in M$ and $y\in \Sigma$. The task at hand is to perform a local expansion of this integral kernel in a geodesic neighbourhood of $(0,y)$ in $M$. In what follows, we identify all Cauchy surfaces $\{t\}\times\Sigma$, $t\in\mathbb{R}$, with $\Sigma$ and we choose geodesic normal coordinates centred at $y$. Furthermore, we adopt the same coordinates for $x$ and $y$.

Since we are only interested in obtaining a local expansion for $x$ close to $y$ and small $t$, when carrying out the algorithm to compute $\mathfrak{f}$ we can choose $\varphi$ to be
\begin{equation}
\label{real Levi-Civita normal coord}
\varphi(t,x;y,\eta)=\langle \exp_y^{-1}(x),\eta\rangle-\|\eta\| t
\end{equation}
in normal coordinates centred at $y$, where $\|\,\cdot\,\|$ is the Euclidean norm, $\langle\,\cdot\,,\,\cdot\rangle$ the Euclidean pairing, and $\exp_y$ the exponential map at $y$. Substituting \eqref{real Levi-Civita normal coord} into \eqref{definition weight} we obtain
\begin{equation}
\label{weight normal coordiantes}
w(t,x;y,\eta)=[\rho_h(x)]^{-1/2}.
\end{equation}

\noindent As a consequence, the integral kernel \eqref{integral kernel calculation hadamard} turns into
\begin{equation}
\mathcal{I}_{\mathfrak{f}}(t,x;y)=\frac{1}{(2\pi)^{d-1} [\rho_h(x)]^\frac12} \int_{T^*_y\Sigma} e^{i \langle \exp_y^{-1}(x),\eta\rangle-i\|\eta\| t}\,\mathfrak{f}(t;y,\eta)\,\chi(t,x;y,\eta)\,d\eta
\end{equation}
where $x$ and $t$ are now such that $(x,t)$ lies in a convex geodesic neighbourhood of $(0,y)$. Furthermore, since $(M,g)$ is ultrastatic, we can assume without loss of generality that $t$ is positive. In addition we can drop the cut-off $\chi$ since it is equal to $1$ in a neighbourhood of the region where the phase is stationary. Dropping it does not affect the final result.

Let $\zeta:[0,+\infty)\to\mathbb{R}$ be a smooth cut-off such that $\zeta(r)=0$ for $r\le 1/2$ and $\zeta(r)=1$ for $r\ge 1$. Then, for every $N\in \mathbb{N}$ we have
\begin{equation}
\label{8 July 2019 formula 5}
\mathcal{I}_{\mathfrak{f}}\stackrel{\mod S^{-N-2}_{\mathrm{ph}}}{=}\frac{1}{(2\pi)^{d-1} [\rho_h(x)]^\frac12} \int_{T^*_y\Sigma} e^{i \langle \exp_y^{-1}(x),\eta\rangle-i\|\eta\| t}\,\sum_{k=0}^N\mathfrak{f}_{-1-k}(t;y,\eta)\,\zeta(\|\eta\|)\,d\eta\,.
\end{equation}
Switching to polar coordinates $(r, \omega)\in \mathbb{R}\times \mathbb{S}^{d-2}$ in $T^*_y\Sigma$ we obtain
\begin{gather}
\mathcal{I}_{\mathfrak{f}}=_N\frac{1}{(2\pi)^{d-1} [\rho_h(x)]^\frac12} \int_0^\infty \int_{\mathbb{S}^{d-2}} e^{i\,r(\langle \exp^{-1}_y(x),\omega\rangle -t)}\,\sum_{k=0}^N\frac{\chi(r)}{r^{1+k}}(t;y,\omega)\,r^{d-2}\,dr \,\operatorname{dVol}_{\mathbb{S}^{d-2}}\notag\\
=_N\frac{1}{(2\pi)^{d-1} [\rho_h(x)]^\frac12}\sum_{k=0}^N \int_0^\infty  e^{-i r t}\, r^{d-k-3}\,\chi(r) \int_{\mathbb{S}^{d-2}} e^{i\,r\langle \exp_y(x),\omega\rangle}\,\mathfrak{f}_{-1-k}(t;y,\omega) \,dr\,\operatorname{dVol}_{\mathbb{S}^{d-2}},\label{8 July 2019 formula 6}
\end{gather}
where $=_N$ is a shortcut notation for $\stackrel{\mod S^{-N-2}_{\mathrm{ph}}}{=}$. We observe that the function $f(\omega):=\langle \exp_y(x),\omega\rangle$ is stationary at $\omega_0=\pm \hat{x}$, where 
\[
\hat{x}:=[\operatorname{dist}_{\Sigma}(x,y)]^{-1} (\exp_y(x))^\flat.
\] 
Furthermore, it holds $\operatorname{Hess} f(\pm \hat{x})=\mp \operatorname{dist}_{\Sigma}(x,y)\,\operatorname{Id}$. This allows us to evaluate the spherical integral via the stationary phase formula for $r\to+\infty$ (see, for example, \cite[Theorem~C.1]{SaVa}) as
\begin{multline}
\label{8 July 2019 formula 8}
\int_{\mathbb{S}^{d-2}} e^{i\,r(\langle \exp_y(x),\omega\rangle)}\,\mathfrak{f}_{-1-k}(t;y,\omega) \,\operatorname{dVol}_{\mathbb{S}^{d-2}}\\
=\sum_{\kappa=\pm} e^{i\,\kappa\, r \dist_\Sigma(x,y)} e^{\frac{i \kappa(2-d)\pi}{4}} \left( \frac{2\pi}{r\,\dist_\Sigma(x,y)} \right)^{\frac{d-2}{2}}\sum_{j=0}^K r^{-j} [L_j \mathfrak{f}_{-1-k}](t;y,\kappa \hat{x}) +O(r^{-K-1})
\end{multline}
for every $K\in \mathbb{N}$, where
\begin{equation}
\label{8 July 2019 formula 9}
[L_j \mathfrak{f}_{-1-k}](t;y,\kappa \hat{x})=\left.\sum_{\substack{\mu,\nu\,:\,\nu-\mu=j\\3\mu\le2\nu}} \dfrac{2^{-\nu}}{\mu!\nu!}  \langle [\operatorname{Hess}f(\kappa \hat{x})]^{-1} D_\omega \,,\, D_\omega \rangle^\nu (f_{0,\kappa}^\mu \mathfrak{f}_{-1-k})(t;y,\omega)\right|_{\omega=\kappa \hat{x}},
\end{equation}
with
\begin{equation}
\label{8 July 2019 formula 10}
f_{0,\kappa}(\omega)=f(\omega)-f(\kappa \hat{x})-\frac12 \langle \operatorname{Hess}f(\kappa \hat{x})(\omega-\kappa\hat{x}), (\omega-\kappa\hat{x})\rangle.
\end{equation}
The local asymptotic expansion is now obtained by substituting \eqref{8 July 2019 formula 8}--\eqref{8 July 2019 formula 10} into \eqref{8 July 2019 formula 6} and subsequently carrying out the integral in $r$. We will compute the first few terms explicitly in the special case
\begin{equation}
\label{special case}
d=4, \quad M= \mathbb{R}\times \mathbb{S}^3.
\end{equation}
The argument in the general case would proceed along the same lines, only it would be a bit more lengthy and technically involute.

When $\Sigma=\mathbb{S}^3$, it is easy to see that the symbol $\mathfrak{f}$ in \eqref{half wave divided by sqrt} depends only on the magnitude of $\eta$, not on its direction. Hence, setting
\begin{equation}
\hat{\mathfrak{f}}(t;y):=\mathfrak{f}(t;y,\omega), \qquad \omega\in \mathbb{S}^2,
\end{equation}
it holds that, modulo $S_{\mathrm{ph}}^{-N-2}$,
\begin{equation}
\label{temp 1 1}
\begin{split}
\mathcal{I}_{\mathfrak{f}}
&
=_N
\frac{1}{(2\pi)^{3} [\rho_h(x)]^\frac12}\sum_{k=0}^N \int_0^{+\infty}  e^{-i r t}\, r^{1-k}\,\chi(r) \int_{\mathbb{S}^{2}} e^{i\,r\langle \exp_y(x),\omega\rangle}\,\hat{\mathfrak{f}}_{-1-k}(t;y) \,\operatorname{dVol}_{\mathbb{S}^{2}}\,dr\\
&=_N\frac{[\dist_{\Sigma}(x,y)]^{-1}}{4\pi^2 [\rho_h(x)]^\frac12}\sum_{k=0}^N \hat{\mathfrak{f}}_{-k-1}(t;y)\times\\
&\times \int_0^{+\infty}  \left( \sum_{j=0}^K \sum_{\kappa=\pm} \kappa\,\dfrac{e^{i r(\kappa\, \dist_\Sigma(x,y)- t)}}{i\,r^{k+j}}\,\chi(r) [L_j (1)](t;y,\kappa \hat{x}) +O(r^{-(k+K+1)}) \right) dr.
\end{split}
\end{equation}
By analysing  formula \eqref{8 July 2019 formula 10} for $f_{0,\kappa}$ and setting $\mathfrak{f}_{-1-k}=1$ in \eqref{8 July 2019 formula 9}, one obtains 
\begin{equation}
[L_{j}(1)](t;y,\kappa \hat{x})=\kappa^j \,[L_{j}(1)](t;y,\hat{x})
\end{equation}
so that \eqref{temp 1 1} becomes
\begin{equation}
\label{temp 1}
\begin{split}
\mathcal{I}_{\mathfrak{f}}
&
=
\frac{[\dist_\Sigma(x,y)]^{-1}}{4\pi^2 [\rho_h(x)]^\frac12}\sum_{k=0}^N \hat{\mathfrak{f}}_{-k-1}(t;y)\times\\
&\times \int_0^{+\infty}  \left( \sum_{j=0}^K \sum_{\kappa=\pm} \kappa^{1+j}\,\dfrac{e^{i r(\kappa\, \dist_\Sigma(x,y)- t)}}{i\,r^{k+j}}\,\chi(r) [L_j (1)](t;y, \hat{x}) +O(r^{-(k+K+1)}) \right) dr.
\end{split}
\end{equation}
The last step in the computation consists in evaluating, in a distributional sense, integrals of the form
\[
\int_{0}^{+\infty} \dfrac{e^{i r \widehat{\varphi}}}{r^n} \,dr, \qquad n\in \mathbb{N},
\]
where $\widehat{\varphi}=\kappa\, \dist_\Sigma(x,y)- t$.

For $n=0$ we have
\begin{equation}
\label{integral n=0}
\begin{split}
\int_0^{+\infty} e^{i r \widehat{\varphi}} \,dr&= \lim_{\varepsilon\to 0^+}\int_0^{+\infty} e^{i r (\widehat{\varphi}+i \varepsilon)} \,dr=\lim_{\varepsilon\to 0^+}\dfrac{i}{\widehat{\varphi}+i \varepsilon}.
\end{split}
\end{equation}

For $n\in \mathbb{N}\setminus\{0\}$ we have
\begin{equation}
\label{integral n>0}
\begin{split}
\int_0^{+\infty} \dfrac{e^{i r \widehat{\varphi}}}{r^n} \,dr&= \lim_{\varepsilon\to 0^+}\int_0^{+\infty} e^{i r (\widehat{\varphi}+i \varepsilon)} \dfrac{(-1)^{n+1}}{(n-1)!}\dfrac{d^n}{dr^n}\log r \,dr\\
&
=
\lim_{\varepsilon\to 0^+}\dfrac{- i^n \,(\widehat{\varphi}+i \varepsilon)^n}{(n-1)!}\int_0^{+\infty} e^{i r (\widehat{\varphi}+i \varepsilon)} \log r \,dr
\\
&
=
\lim_{\varepsilon\to 0^+}\dfrac{- i^{n-1}\,(\widehat{\varphi}+i \varepsilon)^{n-1}}{(n-1)!} \left[\gamma+\log(\varepsilon-i \widehat{\varphi})\right].
\end{split}
\end{equation}
When computing \eqref{integral n>0} we used the fact that, for $\Re z<0$,
\begin{equation}
\int_0^{+\infty} e^{z r} \log r\,dr=\dfrac{\gamma+\log(-z)}{z}.
\end{equation}

\noindent In view of \eqref{integral n=0}, the term corresponding to $j=k=0$ in \eqref{temp 1} is given by
\begin{equation}
\label{first term}
\begin{split}
\dfrac{1}{4\pi^2 [\rho_h(x)]^\frac12}\dfrac{\mathfrak{f}_{-1}(t;y,\hat{x})}{\dist_\Sigma(x,y)}\sum_{\kappa=\pm} \dfrac{\kappa}{\kappa \dist_\Sigma(x,y)-t}
&
=
\dfrac{1}{4\pi^2}\dfrac{\mathfrak{f}_{-1}(t;y,\hat{x})}{[\rho_h(x)]^\frac12} \dfrac{2}{\dist^2_\Sigma(x,y)-t^2}
\\
&
=
\dfrac{1}{8\pi^2}\dfrac{2\,\mathfrak{f}_{-1}(t;y,\hat{x})}{[\rho_h(x)]^\frac12} \dfrac{1}{\sigma((t,x),(0,y))}
\end{split}
\end{equation}

\noindent Since the principal symbol of the half-wave propagator is $1$ \cite{CLV} and since
\[
\left[(-\Delta)^{-\frac12}\right]_\mathrm{prin}(y,\eta)=\frac{1}{\sqrt{h^{\alpha\beta}(y),\eta_\alpha\eta_\beta}}
\]
\cite[Theorem~18.1]{shubin} implies that $\mathfrak{f}_{-1}(t;y,\hat{x})=\frac12$. It follows that \eqref{first term} corresponds to the first term in the expansion \eqref{Eq: Hadamard parametrix}
\begin{equation}
\label{first term bis}
\dfrac{1}{8\pi^2}\dfrac{[\rho_h(x)]^{-\frac12}}{\sigma((t,x),(0,y))}.
\end{equation}
Furthermore we recover the well-known fact that, given two generic points $X=(t,x),Y=(t^\prime,y)\in\mathbb{R}\times\Sigma$, the function $u$ appearing in \eqref{Eq: Hadamard parametrix} satisfies $u(t,y,t,y)=1$ ) and that $u(X,Y)$ coincides with the square root of the Van Vleck-Morette determinant, which, in normal coordinates centred at $y$, reads $[\rho_h(x)]^{-\frac12}$ \cite{Moretti99}.

\begin{remark}
	We would like to stress that, the calculation of the first term \eqref{first term bis} is general and works for an arbitrary four-dimensional ultrastatatic spacetime. In fact, we did not use anywhere the assumption that the Cauchy surface is a sphere. 
\end{remark}

\noindent The substitution of \eqref{first term bis} and \eqref{integral n>0} into \eqref{temp 1} yields
\begin{equation}
\label{temp 3}
\begin{split}
\mathcal{I}_\mathfrak{f}&\sim \dfrac{1}{8\pi^2}\dfrac{[\rho_h(x)]^{-\frac12}}{\sigma((t,x),(0,y))}\\
&
+
\frac{[\dist_\Sigma(x,y)]^{-1}}{4\pi^2 [\rho_h(x)]^\frac12}\sum_{k,j=0}^\infty \sum_{\kappa=\pm}\kappa^{1+j}\,\hat{\mathfrak{f}}_{-k-1}(t;y)[L_j (1)](t;y, \hat{x}) \times \\
&\times\lim_{\varepsilon\to 0^+}\dfrac{i^{j+k}\,(\widehat{\varphi}+i \varepsilon)^{j+k-1}}{(j+k-1)!}\left[\gamma+\log(\varepsilon-i \widehat{\varphi})\right].
\end{split}
\end{equation}
Let us write down explicitly the term with $j+k=1$. We have 
\begin{multline}
\label{next term 1}
\frac{[\dist_\Sigma(x,y)]^{-1}}{4\pi^2 [\rho_h(x)]^\frac12} \sum_{\kappa=\pm} i \left(\kappa \,\hat{\mathfrak{f}}_{-2}+\hat{\mathfrak{f}}_{-1}[L_{-1}(1)]  \right) \left[\gamma+\log(i(t-\kappa \dist_\Sigma(x,y)))\right]=\\
=i \frac{[\dist_\Sigma(x,y)]^{-1}}{4\pi^2 [\rho_h(x)]^\frac12} \left[ 
\hat{\mathfrak{f}}_{-2} \log\left(\dfrac{t-\dist_\Sigma(x,y)}{t+\dist_\Sigma(x,y) } \right)
+
\hat{\mathfrak{f}}_{-1}[L_{-1}(1)] \left(2\gamma+\log(\dist^2_\Sigma(x,y)-t^2)\right)
\right]\\
=\log(\dist_\Sigma^2(x,y)-t^2) \frac{i\,[\dist_\Sigma(x,y)]^{-1}}{4\pi^2 [\rho_h(x)]^\frac12}
\left( 
\hat{\mathfrak{f}}_{-1}[L_{-1}(1)] 
+\hat{\mathfrak{f}}_{-2}
\right)\\
+ \frac{i\,[\dist_\Sigma(x,y)]^{-1}}{4\pi^2 [\rho_h(x)]^\frac12}
\left[ 
\hat{\mathfrak{f}}_{-2}\left( i\pi+\log((t+\dist_\Sigma(x,y))^2) \right)+2\gamma\,\hat{\mathfrak{f}}_{-1}[L_{-1}(1)] 
\right].
\end{multline}
We observe that \eqref{next term 1} contributes a log term and a smooth term to the expansion. It is easy to see that subsequent contributions in \eqref{temp 3} yield terms of the form $v_k\, \sigma^k \log \sigma$ and smooth terms, where the functions $v_k$ are completely determined by the homogeneous components of the symbol $\mathfrak{f}$ and by the geometry. 

\section*{Acknowledgements}

M.C.~is grateful to Dima Vassiliev for enlightening conversations. The authors are grateful to Igor Khavkine and Alex Strohmaier for useful comments. The work of N.D.~is supported by a fellowship of the Alexander von Humboldt foundation and he is grateful to the University of Pavia and of Trento for the hospitality during the realisation of part of this work.

\appendix

\section{Fundamental Facts on Fundamental Solutions}
\label{App: Fundamental Facts on Fundamental Solutions}

In this appendix we recall, for completeness, a few basic well-known properties of the fundamental solutions of the operator $P$ defined in \eqref{Eq:KG}. 
In fact, these operators can be shown to exist for a larger class of backgrounds that those considered in this papers, 
for example when relaxing the assumption of compactness of the Cauchy surface or allowing for the presence of a timelike boundary, see, {\it e.g.}, \cite{Dappiaggi-Drago-Ferreira-19,Dappiaggi-Drago-Longhi-20,Grosse-Murro-18}.

\begin{theorem}\label{Thm:propagators}
	Let $(M,g)$ be a globally hyperbolic spacetime and let $P$ be the Klein--Gordon operator \eqref{Eq:KG}.
	Then $P$ possesses unique {\em advanced} $(-)$ and {\em retarded} $(+)$ {\em fundamental solutions} $G^\pm\in\mathcal{D}^\prime(M\times M)$ such that, calling $\mathcal{G}^\pm:\mathcal{D}(M)\to C^\infty(M)$ the associated maps via the Schwartz kernel theorem \cite[Thm. 8.2.12]{Hor},
	\begin{itemize}
		\item  $P\circ\mathcal{G}^\pm=\mathcal{G}^\pm\circ P=\operatorname{Id}|_{\mathcal{D}(M)}\,$,
		\item $\mathrm{supp}\,(\mathcal{G}^\pm(f))\subseteq J^\mp(\mathrm{supp}(f))$, for all $f\in \mathcal{D}(M)$, where $J^\pm(A)$ stand for the causal future $(+)$ and for the causal past $(-)$ of any open subset $A\subseteq M$.
	\end{itemize}
\end{theorem}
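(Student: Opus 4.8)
The plan is to follow the classical local-to-global construction of Green's operators for normally hyperbolic operators on globally hyperbolic spacetimes (as in \cite{Bar:2007zz}), which proceeds in three stages: a local construction of fundamental solutions via Riesz/Hadamard methods, a local uniqueness statement obtained from energy estimates, and a globalisation exploiting the causal structure of Theorem~\ref{Th:glob_hyp}; throughout one uses only that the principal symbol of $P$ in \eqref{Eq:KG} is that of $\Box_g$. For \textbf{Step 1}, working in a geodesically star-shaped causal domain $\Omega\subseteq M$, I would transplant the Minkowski Riesz distributions $R_\pm(\alpha)$ to $\Omega$ through Synge's world function $\sigma$ and form the formal Hadamard-type series $\sum_{k\ge 0}V_k\,R_\pm(2+2k)$, whose coefficients $V_k$ solve the usual transport equations along geodesics -- the same hierarchy appearing in \eqref{Eq: Hadamard coefficients}. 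Truncating at order $N$ and inserting a cut-off produces a parametrix $\widetilde{R}_\pm$ with $P\widetilde{R}_\pm(\,\cdot\,,Y)=\delta_Y+K_\pm(\,\cdot\,,Y)$, where $K_\pm$ has kernel of regularity increasing with $N$ and support in the local causal cone $J^\pm_\Omega(Y)$; a Neumann/Duhamel iteration $\mathcal{R}^\Omega_\pm:=\widetilde{R}_\pm\circ\sum_{j\ge 0}(-K_\pm)^{\circ j}$, convergent because iterated integration over nested causal diamonds gains regularity, then yields an exact local fundamental solution with $\mathrm{supp}\,\mathcal{R}^\Omega_\pm(f)\subseteq J^\mp_\Omega(\mathrm{supp} f)$.

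For \textbf{Step 2} I would invoke well-posedness of the Cauchy problem for $P$ on a globally hyperbolic spacetime, equivalently the statement that any distribution $u$ with $Pu=0$ and $\mathrm{supp}\,u$ past-compact must vanish. This follows from an energy estimate on the leaves $\{t\}\times\Sigma$ of the Bernal--S\'anchez foliation of Theorem~\ref{Th:glob_hyp}: integrating the stress-energy current naturally associated with $P$ over the region between two slices and applying a Gronwall-type argument propagates the vanishing of the Cauchy data forward in time. Local, and then global, uniqueness of $G^\pm$ is immediate from this: the difference of two candidates applied to $f\in\mathcal{D}(M)$ solves the homogeneous equation with support in $J^\mp(\mathrm{supp} f)$, which is past/future-compact because $\Sigma$ is compact; the same argument gives the two-sided identity $P\circ\mathcal{G}^\pm=\mathcal{G}^\pm\circ P=\operatorname{Id}|_{\mathcal{D}(M)}$ once existence is established.

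For \textbf{Step 3} I would patch the local fundamental solutions into global ones. Cover $M$ by a locally finite family of causal domains $\Omega_i$ as in Step~1 with a subordinate partition of unity, and solve $Pu=f$ for $f\in\mathcal{D}(M)$ by a successive-approximation scheme advancing slice by slice along $M\simeq\mathbb{R}\times\Sigma$: since $\Sigma$ is compact, $J^+(\mathrm{supp} f)\cap J^-(\{t\}\times\Sigma)$ is compact for each $t$, so only finitely many $\Omega_i$ are relevant at each step, the local solutions glue consistently, and the global support property $\mathrm{supp}\,\mathcal{G}^+(f)\subseteq J^-(\mathrm{supp} f)$ descends from the local ones via the causal push-up identities (and symmetrically for $\mathcal{G}^-$). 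Continuity of $\mathcal{G}^\pm$, hence existence of the bidistributional kernels $G^\pm\in\mathcal{D}'(M\times M)$ through the Schwartz kernel theorem, follows either from the closed graph theorem or by tracking seminorm bounds through the iteration.

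The step I expect to be the main obstacle is the globalisation in Step~3: it is here that global hyperbolicity is genuinely used, specifically the compactness of the causal diamonds $J^+(p)\cap J^-(q)$ and the existence of the Cauchy temporal function of Theorem~\ref{Th:glob_hyp}, both needed to control supports and to guarantee convergence of the patching. Since all of this is standard, in the appendix one may alternatively simply quote \cite{Bar:2007zz}.
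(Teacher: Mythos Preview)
Your outline is correct and follows precisely the strategy of \cite{Bar:2007zz}, which is exactly what the paper does: it gives no proof of its own and simply writes ``We refer the reader to \cite{Bar:2007zz} for the proof.'' Your closing remark that one may alternatively just quote \cite{Bar:2007zz} is therefore literally what the paper opts for.

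One small correction: in Steps~2 and~3 you invoke compactness of $\Sigma$ to obtain past/future-compactness of $J^\mp(\mathrm{supp} f)$ and compactness of $J^+(\mathrm{supp} f)\cap J^-(\{t\}\times\Sigma)$. The theorem as stated in the appendix does not assume $\Sigma$ compact (the paper explicitly notes the result holds more generally), and indeed these compactness properties follow from global hyperbolicity alone, via the compactness of causal diamonds $J^+(K_1)\cap J^-(K_2)$ for $K_1,K_2$ compact. This is a minor slip in justification, not in the architecture of the argument.
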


We refer the reader to \cite{Bar:2007zz} for the proof. The distribution $G:=G^+-G^-\in\mathcal{D}^\prime(M\times M)$ is called the \emph{retarded-minus-advanced fundamental solution}.

Using $G^\pm$, one can characterise the space of smooth solutions of \eqref{Eq:KG} as follows.

\begin{lemma}\label{Lem: solution space isomorphism}
	Let $(M,g)$ be a globally hyperbolic spacetime and let $\mathcal{G}^\pm:C^\infty_0(M)\to C^\infty(M)$ be the maps associated via the Schwartz kernel theorem to the advanced and retarded fundamental solutions $G^\pm$ of the Klein--Gordon operator $P$. Let  $\mathcal{S}_{sc}(M)=\{\Phi\in C^\infty_{sc}(M)\;|\;P\Phi=0\}$ where the subscript {\em sc} stands for spacelike compact\footnote{$\Phi\in C^\infty_{sc}(M)$ if it is smooth and the intersection between the support of $\Phi$ and any Cauchy surface is compact.}. Then there exists an isomorphism of topological vector spaces
	$$\mathcal{S}_{sc}(M)\simeq\frac{C^\infty_0(M)}{P[C^\infty_0(M)]},$$
	where the isomorphism is implemented by $\mathcal{G}\doteq \mathcal{G}^+-\mathcal{G}^-$ via the map $\frac{C^\infty_0(M)}{P[C^\infty_0(M)]}\ni [f]\mapsto \mathcal{G}(f)\in\mathcal{S}_{sc}(M)$.
\end{lemma}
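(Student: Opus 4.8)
The plan is to exhibit the map $[f]\mapsto\mathcal G(f)=\mathcal G^+(f)-\mathcal G^-(f)$ as a continuous linear bijection with continuous inverse, following the classical argument for normally hyperbolic operators (see \cite{Bar:2007zz}). \textbf{Step 1 (well-definedness and target).} For $f\in C^\infty_0(M)$ one has $P\,\mathcal G(f)=\mathcal G^+(Pf)\!-\!\mathcal G^-(Pf)$ reversed, more precisely $P\circ\mathcal G^\pm=\operatorname{Id}$ on $\mathcal D(M)$ gives $P\,\mathcal G(f)=f-f=0$, while $\mathcal G(f)\in C^\infty(M)$ by Theorem~\ref{Thm:propagators}. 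Its support lies in $J^+(\operatorname{supp}f)\cup J^-(\operatorname{supp}f)=J(\operatorname{supp}f)$, so its intersection with any Cauchy surface is a closed subset of a compact set (here global hyperbolicity and compactness of $\operatorname{supp}f$ are used), whence $\mathcal G(f)\in\mathcal S_{sc}(M)$. If $f=Pg$ with $g\in C^\infty_0(M)$, then $\mathcal G^\pm(Pg)=g$ by $\mathcal G^\pm\circ P=\operatorname{Id}$, so $\mathcal G(f)=0$; thus $\mathcal G$ descends to a well-defined linear map on $C^\infty_0(M)/P[C^\infty_0(M)]$, continuous because the $\mathcal G^\pm$ are.

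\textbf{Step 2 (injectivity).} Suppose $\mathcal G(f)=0$, i.e.\ $g:=\mathcal G^+(f)=\mathcal G^-(f)\in C^\infty(M)$. Then $Pg=P\,\mathcal G^+(f)=f$ and $\operatorname{supp}g\subseteq J^+(\operatorname{supp}f)\cap J^-(\operatorname{supp}f)$, which is compact since $(M,g)$ is globally hyperbolic and $\operatorname{supp}f$ is compact. Hence $g\in C^\infty_0(M)$ and $f=Pg\in P[C^\infty_0(M)]$, so $[f]=0$.

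\textbf{Step 3 (surjectivity).} Let $\Phi\in\mathcal S_{sc}(M)$, say $\operatorname{supp}\Phi\subseteq J(K)$ with $K$ compact. Fix a Cauchy temporal function $t$ as in Theorem~\ref{Th:glob_hyp}(ii) and choose $\chi\in C^\infty(M)$ with $\chi\equiv1$ on $\{t\ge1\}$ and $\chi\equiv0$ on $\{t\le-1\}$. Put $f:=P(\chi\Phi)$. Since $P\Phi=0$, one has $f=[P,\chi]\Phi$, a first-order differential operator applied to $\Phi$, so $\operatorname{supp}f\subseteq\operatorname{supp}(\mathrm d\chi)\cap\operatorname{supp}\Phi\subseteq\{|t|\le1\}\cap J(K)$, which is compact by global hyperbolicity; thus $f\in C^\infty_0(M)$. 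Now $\chi\Phi$ is a smooth solution of $P(\cdot)=f$ whose support is confined to the causal future of a compact set; comparing it with the element of $\{\mathcal G^+(f),\mathcal G^-(f)\}$ having support confined to the causal future of a compact set (by the support property of Theorem~\ref{Thm:propagators}), the difference solves $P(\cdot)=0$ with support confined to the causal future of a compact set, hence has vanishing Cauchy data on a sufficiently early Cauchy surface, hence vanishes by uniqueness of the Cauchy problem on globally hyperbolic spacetimes. Running the symmetric argument with $(\chi-1)\Phi$ and the other fundamental solution, one concludes $\Phi=\chi\Phi-(\chi-1)\Phi=\mathcal G(f)$ (with the sign fixed by the convention $G=G^+-G^-$).

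\textbf{Step 4 (inverse and conclusion).} The assignment $\Phi\mapsto[\,[P,\chi]\Phi\,]$ is a manifestly continuous map $\mathcal S_{sc}(M)\to C^\infty_0(M)/P[C^\infty_0(M)]$ and, by Step~3, a right inverse of $\mathcal G$; together with Steps~1--2 this shows that $\mathcal G$ is an isomorphism of topological vector spaces. The main obstacle is Step~3: one must carefully track the causal supports of $\chi\Phi$, $(1-\chi)\Phi$ and $\mathcal G^\pm(f)$, verify that the relevant intersections of causal sets with compact sets are compact (this is precisely where global hyperbolicity, via \cite{Bar:2007zz}, is indispensable), and invoke uniqueness of the Cauchy problem for $P$ in the appropriate form.
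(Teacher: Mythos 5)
The paper does not actually write out a proof of this lemma: the appendix states it as a ``recall for completeness'' and refers to~\cite{Bar:2007zz}, whose argument (the classical exact-sequence result for Green's operators of normally hyperbolic operators) is precisely what you reproduce. Your proof is correct and is the argument implicitly being invoked.

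One small point worth tightening: with the paper's stated convention
$\mathrm{supp}\,(\mathcal G^\pm(f))\subseteq J^\mp(\mathrm{supp}\,f)$ -- which is the \emph{opposite} of the usual BGP convention where the retarded Green's operator propagates to the causal future -- your Step~3 actually identifies $\chi\Phi=\mathcal G^-(f)$ (both bounded to the past) and $(\chi-1)\Phi=\mathcal G^+(f)$ (both bounded to the future), so that $\Phi=\mathcal G^-(f)-\mathcal G^+(f)=-\mathcal G(f)=\mathcal G(-f)$. Your parenthetical ``with the sign fixed by the convention'' flags this, but it would be cleaner to say explicitly that $\Phi=\mathcal G(-f)$ and take the right inverse in Step~4 to be $\Phi\mapsto\bigl[-[P,\chi]\Phi\bigr]$; the isomorphism statement is of course unaffected. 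Aside from this bookkeeping, all four steps (including the compactness of $\{|t|\le1\}\cap J(K)$ and of $J^+(\operatorname{supp}f)\cap J^-(\operatorname{supp}f)$ under global hyperbolicity, and the use of uniqueness for the Cauchy problem) are exactly the standard ones and are sound.
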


\noindent In this paper, we are particularly interested in the map that associates to initial data of the Klein--Gordon equation $P$ the corresponding solution.

\begin{proposition}\label{Prop: initial data to solutions}
	Let $(M,g)$ be a globally hyperbolic spacetime and let $G$ be the retarded-minus-advanced fundamental solution associated the Klein--Gordon operator $P$. Let $\Sigma_{\overline{t}}\doteq\{\overline{t}\}\times\Sigma$ be an arbitrary but fixed Cauchy surface and let $n$ be its future pointing, normal vector of unit norm. Let $\iota_{\overline{t}}: \Sigma_{\overline{t}} \to M$ be the standard inclusion map. Let $\mathcal{G}_{0,\overline{t}}:C^\infty_0(\Sigma_{\overline{t}})\to C^\infty(M)$ and $\mathcal{G}_{1,\overline{t}}:C^\infty_0(\Sigma_{\overline{t}})\to C^\infty(M)$ be the linear maps obtained via the Schwartz kernel theorem respectively from $(\mathbb{I}\otimes\iota^*_{\overline{t},n}) G$ and $(\mathbb{I}\otimes\iota^*_{\overline{t}}) G$ where $\iota^*_{\overline{t}}$ is the pull-back map and $\iota^*_{\overline{t},n}:=\iota^*_{\overline{t}}\circ(n^\mu\nabla_\mu)$. Then, for every $f_0, f_1\in C^\infty_0(\Sigma_{\overline{t}})$, the Cauchy problem
	$$\left\{\begin{array}{l}
	P\Phi=0,\\
	\Phi|_{\Sigma_{\overline{t}}}=f_0,\quad (n^\mu\partial_\mu\Phi)|_{\Sigma_{\overline{t}}}=f_1,
	\end{array}
	\right.$$
	admits as unique solution
	\begin{equation}\label{Eqn: solution_as_convolution}
	\Phi= \mathcal{G}_{0,\overline{t}}(f_0)+\mathcal{G}_{1,\overline{t}}(f_1).
	\end{equation}
\end{proposition}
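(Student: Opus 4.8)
\emph{Proof proposal.}
The plan is to split the statement into two independent claims: \emph{uniqueness} of the Cauchy problem, and the assertion that the right-hand side of \eqref{Eqn: solution_as_convolution} actually \emph{is} a solution carrying the prescribed data. Uniqueness is classical for normally hyperbolic operators on globally hyperbolic spacetimes and is subsumed, together with the existence of $G^\pm$, in Theorem~\ref{Thm:propagators} and the discussion in \cite{Bar:2007zz}. Hence the work reduces to verifying the two defining properties of $\Phi:=\mathcal{G}_{0,\overline t}(f_0)+\mathcal{G}_{1,\overline t}(f_1)$, which by hypothesis is a well-defined element of $C^\infty(M)$.

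First I would check that $P\Phi=0$. Since $\mathcal{G}^\pm$ are two-sided inverses of $P$, the retarded-minus-advanced kernel satisfies $(P\otimes\mathbb{I})G=0$ in $\mathcal{D}'(M\times M)$. The kernels of $\mathcal{G}_{0,\overline t}$ and $\mathcal{G}_{1,\overline t}$ are obtained from $G$ by composing the second slot with the transversal pull-backs $\iota^*_{\overline t,n}$ and $\iota^*_{\overline t}$; these operations act only on the $\Sigma_{\overline t}$-variable and therefore commute with $P$ acting in the $M$-variable. Testing against an arbitrary $\chi\in C^\infty_0(M)$ and invoking $(P\otimes\mathbb{I})G=0$ then yields $P\Phi=0$.

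The core of the argument is the matching of the Cauchy data, which I would carry out by Green's second identity for $P$, namely $\int_\Omega (v\,Pu-u\,Pv)\,\rho_g\,dX=\int_{\partial\Omega}(v\,\nabla_n u-u\,\nabla_n v)\,dS$, applied on the two regions $\Omega_\pm:=J^\pm(\Sigma_{\overline t})$, whose common boundary is $\Sigma_{\overline t}$. Taking $u$ to be the (by uniqueness, unique) solution with data $(f_0,f_1)$ and $v:=\mathcal{G}^\mp(\chi)$ for $\chi\in C^\infty_0(M)$ arbitrary, the support property $\operatorname{supp}\mathcal{G}^\pm(\chi)\subseteq J^\mp(\operatorname{supp}\chi)$ of Theorem~\ref{Thm:propagators} guarantees that $v$ meets $\Omega_\pm$ in a region that is compact in the relevant time-direction, so no boundary contribution arises from infinity; moreover $Pv=\chi$ on $\Omega_\pm$ whereas $Pu=0$. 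Adding the two identities, the boundary terms over $\Sigma_{\overline t}$ coming from $\Omega_+$ and $\Omega_-$ combine --- with opposite induced orientations --- into a single integral over $\Sigma_{\overline t}$ built out of $\mathcal{G}(\chi)=\mathcal{G}^+(\chi)-\mathcal{G}^-(\chi)$ and its normal derivative, while the bulk terms assemble into $\int_M u\,\chi\,\rho_g\,dX$. Rewriting $\mathcal{G}(\chi)|_{\Sigma_{\overline t}}$ and $\nabla_n\mathcal{G}(\chi)|_{\Sigma_{\overline t}}$ in terms of the kernel $G$, and using the antisymmetry $G(X,Y)=-G(Y,X)$ (which follows from the formal self-adjointness of $P$ with respect to $\rho_g$ and the relation $(\mathcal{G}^+)^*=\mathcal{G}^-$), one recognises the resulting expression as $\langle\mathcal{G}_{0,\overline t}(f_0)+\mathcal{G}_{1,\overline t}(f_1),\chi\rangle$. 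As $\chi$ is arbitrary, $u=\Phi$; in particular $\Phi$ carries the data $(f_0,f_1)$, and this together with uniqueness yields \eqref{Eqn: solution_as_convolution}.

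I expect the main obstacle to be purely technical rather than conceptual: all the integrations by parts above must be justified at the level of distributions rather than of smooth functions, since the kernels of $G^\pm$ are singular on the light cone, and one must track meticulously the orientation of the future-pointing unit normal $n$ of $\Sigma_{\overline t}$ --- appearing once as an outward normal of $\Omega_-$ and once as an inward normal of $\Omega_+$ --- as well as the convention tying $G^\pm$ to $J^\mp$. It is precisely this bookkeeping that pins down the relative sign between the $f_0$- and $f_1$-contributions in \eqref{Eqn: solution_as_convolution}. None of this presents a genuine difficulty, which is why the detailed verification is, as customary, left to \cite{Bar:2007zz}.
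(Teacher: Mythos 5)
Your proof is correct and takes essentially the same route as the paper, which simply defers to \cite[Lemma~3.2.2]{Bar:2007zz}. The content of that cited lemma is precisely your Green's identity argument on $J^\pm(\Sigma_{\overline t})$ combined with the support properties of $\mathcal{G}^\pm$ and the antisymmetry of $G$, so you have reconstructed the referenced proof rather than supplied a genuinely different one.
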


The proof of the above proposition is a direct adaptation to our notations and setting of \cite[Lemma 3.2.2]{Bar:2007zz}.

\end{document}